\DeclareOldFontCommand{\rm}{\normalfont\rmfamily}{\mathrm}
\DeclareOldFontCommand{\sf}{\normalfont\sffamily}{\mathsf}
\DeclareOldFontCommand{\tt}{\normalfont\ttfamily}{\mathtt}
\DeclareOldFontCommand{\bf}{\normalfont\bfseries}{\mathbf}
\DeclareOldFontCommand{\it}{\normalfont\itshape}{\mathit}
\DeclareOldFontCommand{\sl}{\normalfont\slshape}{\@nomath\sl}
\DeclareOldFontCommand{\sc}{\normalfont\scshape}{\@nomath\sc}
\def\N{{\mathbb{N}}}
\def\Z{{\mathbb{Z}}}
\def\Q{{\mathcal{Q}}}
\def\K{{\mathbb{K}}}
\def\A{{\mathcal{A}}}
\def\B{{\mathcal{B}}}
\newcommand{\Am}{(\mathcal{A},m)}
\newcommand{\p}{\partial}
\DeclareMathOperator{\pdeg}{pdeg}
\DeclareMathOperator{\codim}{codim}
\DeclareMathOperator{\Der}{Der}
\numberwithin{equation}{section}
\theoremstyle{break}
\newtheorem{theorem}{Theorem}[section]
\newtheorem{prop}[theorem]{Proposition}
\newtheorem{cor}[theorem]{Corollary}
\newtheorem{lemma}[theorem]{Lemma}
\newtheorem{definition}[theorem]{Definition}
\newtheorem{example}[theorem]{Example}
\title{Locally Heavy Hyperplanes in Multiarrangements}
\author{Takuro Abe\footnote{Institute of Mathematics for Industry, Kyushu University, Japan. Email: abe@imi.kyushu-u.ac.jp} and 
Lukas K\"{u}hne\footnote{Einstein Institute of Mathematics, The Hebrew University of Jerusalem, Israel and Max Planck Institute for Mathematics in the Sciences in Leipzig, Germany. Email: lf.kuehne@gmail.com}
}
\date{\today} 
\begin{document}

\maketitle

\begin{abstract}
Hyperplane Arrangements of rank $3$ admitting an unbalanced Ziegler restriction are known to fulfill Terao's conjecture.
This long-standing conjecture asks whether the freeness of an arrangement is determined by its combinatorics.
In this note we prove that arrangements which admit a locally heavy flag satisfy Terao's conjecture which is a generalization of the statement above to arbitrary dimension.
To this end we extend results characterizing the freeness of multiarrangements with a heavy hyperplane to those satisfying the weaker notion of a locally heavy hyperplane.
As a corollary we give a new proof that irreducible arrangements with a generic hyperplane are totally nonfree.
In another application we show that an irreducible multiarrangement of rank $3$ with at least two locally heavy hyperplanes is not free.
\end{abstract}

\emph{Keywords: }  Hyperplane Arrangements, Free Arrangement, Logarithmic derivations, Multiarrangement.

\emph{2010 MSC:} 32S22, 52C35, 13N15.

\section{Introduction}
Inspired by singularity theory, K.\ Saito initiated the study of logarithmic vector fields on a hypersurface~\cite{Sai80}.
In the special case of hyperplane arrangements H.\ Terao subsequently showed that on can pass from analytic to algebraic consideration by introducing free arrangements~\cite{Ter80}.
The long-standing open problem in this area is Terao's conjecture asserting that the freeness of an hyperplane arrangement only depends on its underlying combinatorics.
A recent approach to this problem, which gives a partial answer, is based on multiarrangements and Yoshinaga's criterion, cf.~\cite{Yos04,Yos05,AY13,Abe16}.

In~\cite{AK18}, the authors defined heavy and locally heavy hyperplanes in multiarrangements and developed the theory of the former.
As a main result, heavy flags are introduced as a class of arrangements for which Terao's conjecture holds.
The goal of this note is to generalize this theory to locally heavy hyperplanes.
We start by giving our main definition of (locally) heavy hyperplanes.

\begin{definition}
Let $(\A,m)$ be a multiarrangement in $V=\K^\ell$. 
\begin{itemize}
\item[(1)]
A hyperplane $H_0\in\A $ is called \textbf{heavy} if
\[m(H_0) \ge \sum_{L\in\A, \  L\neq H_0} m(L).\]
\item[(2)]
A hyperplane $H_0\in\A $ is called \textbf{locally heavy} if 
\begin{equation}\label{eq:local_heavy}
m(H_0) \ge \sum_{L\in\A_X, \  L\neq H_0} m(L)
\end{equation}
for all localizations $(\A_X,m_X)$ with $X \in \A^{H_0}$ and 
$|\A_X| \ge 3$.

\end{itemize}
\end{definition}

An advantage of working with locally heavy hyperplanes in multiarrangements is that the Euler restriction $(\A^H,m^*)$ onto a locally heavy hyperplane $H_0$ is of a particular nice shape, namely $m^*(X) = |m_X| - m(H_0)$ for all $X\in \A^{H_0}$.
Since this last equation always holds for $X\in \A^{H_0}$ with $|\A_X|=2$ we only demand Equation~\eqref{eq:local_heavy} to hold for the flats $X \in \A^{H_0}$  with $|\A_X|\ge 3$.

In particular, the Euler multiplicity onto a locally heavy hyperplane is combinatorially determined which is usually not the case, since its definition is of algebraic nature (cf.~\cite{ATW08}).
This multiplicity on the restriction $\A^H$ also coincides with the natural generalization of Ziegler's multirestriction to the setting of multiarrangements.
We call the restriction to a locally heavy hyperplane therefore an \textbf{Euler--Ziegler restriction}.

An important tool to investigate the freeness of an arrangements is its \textbf{second Betti number} $b_2$.
This approach has been pioneered by Yoshinaga in~\cite{Yos05} and by the first author in joint work with Yoshinaga in~\cite{AY13}.
In the case of a simple complex arrangement, this number agrees with the second topological Betti number of the complement with rational coefficients.
In general, the second Betti number can be defined as follows:

\begin{definition}\label{def:b2}
	\begin{enumerate}
		\item[(1)] For a simple arrangement $\A$ its second Betti number $b_2(\A)$ is
		\[
			b_2(\A) \coloneqq \sum_{X\in L_2(\A)} |\A_X|-1.
		\]
		\item[(2)]  For a multiarrangement $\Am$ its second Betti number $b_2\Am$ is defined as
		\[
				b_2\Am\coloneqq \sum_{X\in L_2(\A)} d_X^{(1)}d_X^{(2)},
		\]
		where $d_X^{(1)},d_X^{(2)}$ are the nonzero exponents of the multiarrangement $(\A_X,m_X)$ which is a multiarrangement of rank $2$ and thus free.
	\end{enumerate}
\end{definition}
For a multiarrangement $\Am$ of rank $\ell$, the Betti number $b_2\Am$ is the coefficient of $t^{\ell-2}$ of the characteristic polynomial defined in~\cite{ATW07}.
Note that in the case of a multiarrangement, these coefficients were not called Betti numbers in~\cite{ATW07}; we introduced this term for multiarrangements in~\cite{AK18}.

A main result of this article is a strengthening of Theorem 1.2 in ~\cite{AK18} which establishes a connection between the freeness of a locally heavy multiarrangement and its Euler--Ziegler restriction:

\begin{theorem}\label{th:loc_heavy}
	Let $(\A,m)$ be a multiarrangement with a locally heavy hyperplane $H_0 \in \A$ with 
	$m_0 \coloneqq m(H_0)$. Then 
	\begin{equation}\label{eq:b2}
	b_2(\A,m)-m_0(|m|-m_0) \ge b_2(\A^{H_0},m^{H_0}),
	\end{equation}
	Moreover, 
	$(\A,m)$ is free 
	if and only if the Euler--Ziegler restriction $(\A^{H_0},m^{H_0})$ of 
	$(\A,m)$ onto $H_0$ is free, and Inequality~\eqref{eq:b2} holds with equality.
\end{theorem}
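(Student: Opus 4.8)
The plan is to localise the whole statement at rank-two and rank-three flats, so that Inequality~\eqref{eq:b2} becomes a sum of rank-three inequalities, while the freeness clause reduces to a single application of Yoshinaga's criterion through the Ziegler restriction. Throughout I would exploit that local heaviness is inherited by every localisation $(\A_Z,m_Z)$ and that it forces the Euler restriction to agree with the Ziegler multirestriction.

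I would begin with the decomposition $L_2(\A)=L_2^{\mathrm{in}}\sqcup L_2^{\mathrm{out}}$, where $L_2^{\mathrm{in}}$ consists of those $Y\in L_2(\A)$ with $H_0\in\A_Y$ (equivalently $Y\subseteq H_0$, so $Y$ is a hyperplane of $\A^{H_0}$) and $L_2^{\mathrm{out}}$ of the remaining rank-two flats. For $Y\in L_2^{\mathrm{in}}$ the localisation $(\A_Y,m_Y)$ has $H_0$ as a rank-two heavy hyperplane: when $|\A_Y|\ge 3$ this is exactly Inequality~\eqref{eq:local_heavy}, and when $|\A_Y|=2$ it is automatic. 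Hence $(\A_Y,m_Y)$ is unbalanced with exponents $\bigl(m^{H_0}(Y),m_0\bigr)$, giving $d_Y^{(1)}d_Y^{(2)}=m_0\,m^{H_0}(Y)$. Summing over $L_2^{\mathrm{in}}$ and using $\sum_{Y\in L_2^{\mathrm{in}}}m^{H_0}(Y)=\sum_{L\neq H_0}m(L)=|m|-m_0$ yields
\[
\sum_{Y\in L_2^{\mathrm{in}}}d_Y^{(1)}d_Y^{(2)}=m_0(|m|-m_0),
\]
so Inequality~\eqref{eq:b2} is equivalent to the reduced inequality $\sum_{Y\in L_2^{\mathrm{out}}}d_Y^{(1)}d_Y^{(2)}\ge b_2(\A^{H_0},m^{H_0})$.

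To prove the reduced inequality I would localise at the rank-three flats $Z\subseteq H_0$. Each such $Z$ contributes the single term $f_Z^{(1)}f_Z^{(2)}$ to $b_2(\A^{H_0},m^{H_0})$, the product of the exponents of the rank-two Euler--Ziegler restriction $(\A_Z)^{H_0}$ of $(\A_Z,m_Z)$. The map $Y\mapsto Y\cap H_0$ sends $L_2^{\mathrm{out}}$ onto the contributing flats $Z$, its fibre over $Z$ being precisely the rank-two flats of $\A_Z$ not contained in $H_0$; this fibre is nonempty whenever $Z$ contributes, since two distinct restricted hyperplanes through $Z$ meet in such a flat. As $H_0$ is again locally heavy in $(\A_Z,m_Z)$, summing over $Z$ reduces the claim to the rank-three case $\sum_{Y\cap H_0=Z}d_Y^{(1)}d_Y^{(2)}\ge f_Z^{(1)}f_Z^{(2)}$. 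This last inequality is Yoshinaga's inequality for $(\A_Z,m_Z)$ relative to $H_0$: its Ziegler restriction is the rank-two, hence free, arrangement $(\A_Z)^{H_0}$, which local heaviness identifies with the Euler restriction, so that the correction term is exactly $m_0(|m_Z|-m_0)$.

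For the freeness equivalence I would invoke Yoshinaga's criterion for multiarrangements in terms of the Ziegler restriction, as used in~\cite{AK18} (cf.~\cite{Yos04}). Because $H_0$ is locally heavy, $(\A^{H_0},m^{H_0})$ is the Ziegler multirestriction of $(\A,m)$ along $H_0$, and $|m^{H_0}|=|m|-m_0$ forces the matching of first Betti numbers and the extra exponent contributed by $H_0$ to equal $m_0$. The criterion then states that $(\A,m)$ is free if and only if $(\A^{H_0},m^{H_0})$ is free and $b_2(\A,m)=b_2(\A^{H_0},m^{H_0})+m_0(|m|-m_0)$, which by the decomposition above is exactly Inequality~\eqref{eq:b2} at equality; in the free case the exponents are $(e_1,\dots,e_{\ell-1},m_0)$ with $(e_1,\dots,e_{\ell-1})=\exp(\A^{H_0},m^{H_0})$, and the expansion $b_2(\A,m)=\sum_{i<j}e_ie_j+m_0\sum_i e_i$ confirms this. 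I expect the main obstacle to lie here: verifying that Yoshinaga's criterion applies in arbitrary rank with the Ziegler restriction replaced by the Euler--Ziegler restriction and with precisely the correction term $m_0(|m|-m_0)$, and that local heaviness, rather than the global heaviness of~\cite{AK18}, is genuinely sufficient at each localisation involved.
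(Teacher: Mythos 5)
Your treatment of Inequality~\eqref{eq:b2} is sound but roundabout. The first step (splitting $L_2(\A)$ into flats inside and outside $H_0$ and evaluating the inner contribution as $m_0(|m|-m_0)$) is exactly the paper's Lemma~\ref{lem:b2}, and your fibration of $L_2^{\mathrm{out}}$ over $L_2(\A^{H_0})$ is correct. However, the rank-three input you then invoke as ``Yoshinaga's inequality for $(\A_Z,m_Z)$'' is not a statement about multiarrangements: Theorem~\ref{thm:AY} from~\cite{AY13} concerns \emph{simple} arrangements and their Ziegler restrictions. What actually covers the multiarrangement case is Theorem~\ref{th:b2_ineq} from~\cite{AK18}, which bounds $b_2$ of the \emph{Euler} restriction at any hyperplane and in any rank; combined with the identity $m^*=m^{H_0}$ for a locally heavy $H_0$ (\cite[Proposition 4.1]{ATW08}), it yields~\eqref{eq:b2} in one line with no localisation at all --- which is the paper's proof. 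So your localisation machinery for the inequality is harmless but superfluous once the correct citation is in place.

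The genuine gap is the freeness equivalence, and it sits exactly where you predicted. The ``Yoshinaga criterion for multiarrangements in terms of the Ziegler restriction'' that you invoke does not exist as a citable result: Yoshinaga's criterion is proved for simple arrangements, where the Euler derivation is available to split off a rank-one summand of the derivation module, and its multiarrangement analogue, Theorem~\ref{th:heavy_thm} (\cite[Theorem 1.2]{AK18}), requires $H_0$ to be \emph{globally} heavy. Extending that criterion from heavy to locally heavy hyperplanes is precisely the content of Theorem~\ref{th:loc_heavy}, so at this point your argument assumes what is to be proven. The paper closes this gap with three ingredients absent from your proposal: (i) Proposition~\ref{prop:good_summand}, which shows that a free $\Am$ with locally heavy $H_0$ possesses a ``good summand'' $\theta_j$ with $\theta_j(\alpha_0)=\alpha_0^{m_0}$ --- the substitute for the missing Euler derivation --- and that the Euler restriction of a free $\Am$ is then free; (ii) Corollary~\ref{cor:free}, which combines this with the addition--deletion theorem (Theorem~\ref{thm:add_del}) to show that freeness of $(\A,m+k\delta_{H_0})$ is independent of $k$ as long as $H_0$ remains locally heavy; and (iii) the observation that both sides of~\eqref{eq:b2} are unchanged under such shifts (Lemma~\ref{lem:b2} for the left-hand side, the definition of $m^{H_0}$ for the right-hand side), so one may raise $m(H_0)$ until $H_0$ is genuinely heavy and only then quote Theorem~\ref{th:heavy_thm}. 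Without this multiplicity-shifting reduction, or an independent proof of the good-summand proposition, your final step is unsupported.
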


The left hand side of Inequality~\eqref{eq:b2} is the second Betti number of the multiarrangement $\Am$ away from the locally heavy hyperplane $H_0$.
This statement will be made precise in Lemma~\ref{lem:b2}.

The following example is an application of Theorem~\ref{th:loc_heavy} to a multiarrangement whose underlying simple arrangement is the braid arrangement $A_3$.
\begin{example}\label{ex:1}
Consider the multiarrangement $\Am$ given by the defining equation
\[
\Q\Am = x^a(x-y)^a(x-z)^ay^a(y-z)^az^{m_0},
\]
for some positive integers $a$ and $m_0$ with $m_0\ge 2a$.
\begin{figure}
\centering
\begin{tikzpicture}
[is/.style={circle,draw=black!100,fill=white!100},
la/.style={circle},scale=1.1 ,rotate=-45
]	
	\draw (1,-2) -- (-2,1) -- (-2,2) -- (-1,2) -- (2,-1);
	\draw (-1,-2) -- (2,1) -- (2,2) -- (1,2) -- (-2,-1);
	\draw (0,-2.25) -- (0,{sqrt(8)});
	\draw (0,0) circle [radius={sqrt(8)}];
	
	\node at (1,0) [is] {$a^2$};
	\node at (0,1) [is] {$k$};
	\node at (-1,0) [is] {$a^2$};
	\node at (0,-1) [is] {$k$};
	\node at (-1.2,-2.2) [la] {$a$};
	\node at (-2.2,-1.2) [la] {$a$};
	\node at ( 0,-2.6) [la] {$a$};
	\node at (1.2,-2.2) [la] {$a$};
	\node at (2.2,-1.2) [la] {$a$};
	\node at (2.2,-2.2) [la] {$m_0$};		
	
\end{tikzpicture}
\caption{The projectivized picture of $\Am$. The numbers in the intersections denote the value of $b_2$ in the localization along these flats where $k \coloneqq \left\lfloor\frac{3a}{2}\right\rfloor\left\lceil\frac{3a}{2}\right\rceil$.}\label{fig:delA3}
\end{figure}
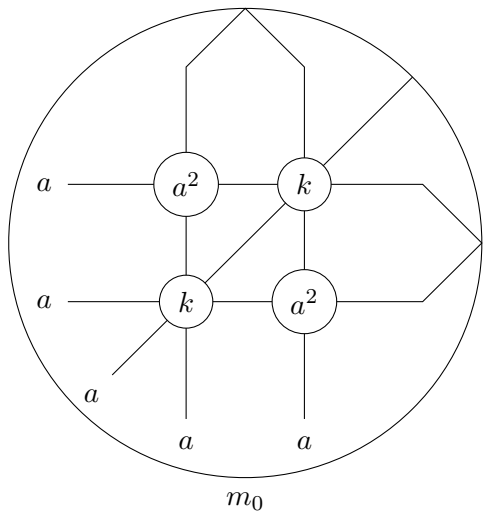
Figure~\ref{fig:delA3} shows a projectivized picture of $\Am$ where $H_0\coloneqq \{z=0\}$ is the hyperplane at infinity.
Note that by assumption $H_0$ is locally heavy.

In the circled intersections at rank a rank two flat $X$ in Figure~\ref{fig:delA3} we display the Betti number $b_2(\A_X,m_X)$.
If the intersection is a rank two flat of size two this is just the product of the multiplicities of  the two defining hyperplanes.
If the intersection is defined by three hyperplanes we use Wakamiko's result in~\cite{Wak07} that yields $b_2(\A_X,m_X)=\left\lfloor\frac{m_X}{2}\right\rfloor\left\lceil\frac{m_X}{2}\right\rceil$ in this case (assuming that this localization does not have a heavy hyperplane which is true in all applications of this theorem in this example).

Now, we can compute the left hand side of Inequality~\eqref{eq:b2} using the above discussion and Definition~\ref{def:b2} as
\begin{equation}\label{eq:lhs}
2a^2 +2\left\lfloor\frac{3a}{2}\right\rfloor\left\lceil\frac{3a}{2}\right\rceil.
\end{equation}
On the other hand, the defining equation $\Q(\A^{H_0},m^{H_0})$ of the Euler--Ziegler restriction to $H_0$ is $x^{2a}y^{2a}(x-y)^a$.
This restricted multiarrangement is of rank $2$ and therefore free.
Wakamiko's theorem~\cite{Wak07} again yields $\exp(\A^{H_0},m^{H_0})=\left(\left\lfloor\frac{5a}{2}\right\rfloor,\left\lceil\frac{5a}{2}\right\rceil\right)$ and therefore
\begin{equation}\label{eq:rhs}
b_2(\A^{H_0},m^{H_0})=\left\lfloor\frac{5a}{2}\right\rfloor\left\lceil\frac{5a}{2}\right\rceil.
\end{equation}
Hence, Theorem~\ref{th:loc_heavy} shows that $\Am$ is free if and only if the Expressions~\eqref{eq:lhs} and~\eqref{eq:rhs} agree.
A short computations yields that this is the case if and only if $a=1$ in which case $\Am$ is free with $\exp\Am=(m_0,2,3)$.
\end{example}

One of the most important open conjectures in the field of hyperplane arrangements is Terao's conjecture. It asks whether the freeness of an arrangement $\A$ is determined by its intersection lattice $L(\A)$.
In this note we will consider Terao's conjecture for the special class of arrangements which iteratively admit locally heavy Euler--Ziegler restrictions.
We call such a chain of restrictions a locally heavy flag:

\begin{definition}
	Let $\A$ be an arrangement in $V=\K^\ell$.
	A flag $\{X_i\}_{i=1}^\ell$ of $\A$ with $X_i \in L_i(\A)$, i.e.\ $\codim X_i = i$, is called a \textbf{locally heavy flag} if $X_{i+1} \in \A^{X_i}$ is locally heavy in $(\A^{X_i},m^{X_i})$ for 
	$i=1,\ldots,\ell-1$.
	Let $LHF_\ell$ be the set of hyperplane arrangements in 
	$V=\K^\ell$ such that 
	every $\A \in LHF_\ell$ admits a locally heavy flag.
\end{definition}

Theorem~\ref{th:loc_heavy} yields a positive answer to Terao's conjecture for the class of arrangements in $LHF_\ell$ which generalizes Corollary 1.8 in~\cite{AK18}:

\begin{theorem}\label{th:flag}
	Let $\A$ be an arrangement in $V=\K^\ell$ which admits a locally heavy flag $\{X_i\}_{i=1}^\ell$.
	Then $\A$ is free with
	$\exp(\A)=(1,m^{X_1}(X_{2}),\ldots,
	m^{X_{\ell-1}}(X_{\ell}))$ if and only if 
	\begin{equation}\label{flagEq}
	b_2(\A)=\sum_{0 \le i< j \le \ell-1}m^{X_i}(X_{i+1}) m^{X_j}(X_{j+1}),
	\end{equation}
	where we set $m^{X_0}(X_1)\coloneqq 1$.
	In this case, this flag is both a
	supersolvable filtration and a divisional flag as in \cite{Abe16}.
	In particular, the freeness of any
	$\A \in LHF_\ell$ 
	depends only on its combinatorics, namely its intersection lattice $L(\A)$.
\end{theorem}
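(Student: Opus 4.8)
The plan is to induct on the rank $\ell$, peeling the hyperplanes of the flag off one at a time and feeding the two halves of Theorem~\ref{th:loc_heavy} into one another. Throughout I write $d_i\coloneqq m^{X_{i-1}}(X_i)$ for $i=1,\dots,\ell$ (so $d_1=m(X_1)$, which is $1$ for the simple arrangement of the theorem); then the asserted exponents are $(d_1,\dots,d_\ell)$ and the right-hand side of~\eqref{flagEq} is $\sum_{1\le i<j\le\ell}d_id_j$. The backbone of the argument is the identity $\sum_{i=1}^\ell d_i=|m|$, which I would prove first by induction along the flag: the Euler--Ziegler formula recalled after Theorem~\ref{th:loc_heavy} gives $|m^{X_1}|=|m|-m(X_1)$ for the restriction onto a locally heavy hyperplane, and combined with the inductive hypothesis $\sum_{i\ge 2}d_i=|m^{X_1}|$ this yields $\sum_i d_i=|m|$.

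To run the induction I would first establish the multiarrangement version: if $(\A,m)$ is a multiarrangement whose flag has $X_1$ locally heavy in $(\A,m)$ and $X_{i+1}$ locally heavy in $(\A^{X_i},m^{X_i})$ for all $i$, then $b_2(\A,m)\ge\sum_{1\le i<j\le\ell}d_id_j$, with equality if and only if $(\A,m)$ is free with exponents $(d_1,\dots,d_\ell)$. Peeling off $X_1$ (with $m_0=d_1=m(X_1)$) and using $\sum_{j\ge 2}d_j=|m|-d_1$, the difference $b_2(\A,m)-\sum_{1\le i<j\le\ell}d_id_j$ becomes the sum of the nonnegative obstruction $b_2(\A,m)-d_1(|m|-d_1)-b_2(\A^{X_1},m^{X_1})$ supplied by Inequality~\eqref{eq:b2} and the quantity $b_2(\A^{X_1},m^{X_1})-\sum_{2\le i<j\le\ell}d_id_j$, which is nonnegative by the inductive hypothesis applied to $(\A^{X_1},m^{X_1})$ (its leading flag member $X_2$ is locally heavy by definition). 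As both summands are nonnegative, the total vanishes exactly when each does; by the moreover part of Theorem~\ref{th:loc_heavy} together with the inductive hypothesis this happens precisely when $(\A^{X_1},m^{X_1})$ is free and the first obstruction is zero, i.e.\ precisely when $(\A,m)$ is free, in which case $\exp(\A,m)=(d_1,\exp(\A^{X_1},m^{X_1}))=(d_1,\dots,d_\ell)$.

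The arrangement of the theorem is the case $m\equiv 1$, but its initial hyperplane $X_1$ need not be locally heavy, so the very first restriction lies outside Theorem~\ref{th:loc_heavy}. Here I would instead invoke Yoshinaga's freeness criterion for the Ziegler restriction of a simple arrangement~\cite{Yos05,AY13}, which is the $m_0=1$ avatar of Theorem~\ref{th:loc_heavy}: one has $b_2(\A)\ge b_2(\A^{X_1},m^{X_1})+(|\A|-1)$, and $\A$ is free if and only if $(\A^{X_1},m^{X_1})$ is free and equality holds. Since $d_1=1$ and $|\A|-1=|m^{X_1}|=\sum_{i\ge 2}d_i$, the same bookkeeping writes $b_2(\A)-\sum_{1\le i<j\le\ell}d_id_j$ as the Yoshinaga obstruction plus $b_2(\A^{X_1},m^{X_1})-\sum_{2\le i<j\le\ell}d_id_j$, the latter controlled by the multiarrangement statement already proved for the rank-$(\ell-1)$ arrangement $(\A^{X_1},m^{X_1})$. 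Chasing the two equivalences gives that $\A$ is free with $\exp(\A)=(1,d_2,\dots,d_\ell)$ if and only if~\eqref{flagEq} holds.

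For the combinatorial conclusions I would show by induction along the flag that every $d_i$ depends only on $L(\A)$: the Euler--Ziegler formula gives $d_{i+1}=|m^{X_{i-1}}_{X_{i+1}}|-d_i$, where the total multiplicity of the rank-two localization $(\A^{X_{i-1}})_{X_{i+1}}$ is combinatorial once $m^{X_{i-1}}$ is, while the base case $d_1=1$ is trivially combinatorial. As $b_2(\A)$ is combinatorial by Definition~\ref{def:b2}, the criterion~\eqref{flagEq} is a condition on $L(\A)$ alone, so freeness within $LHF_\ell$ depends only on the intersection lattice. That the flag is at once a supersolvable filtration and a divisional flag in the sense of~\cite{Abe16} I would then read off from the freeness and the nested exponents just obtained, verifying that the characteristic polynomials of the successive restrictions factor compatibly and divide one another. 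I expect the main obstacle to be the junction between the simple top of the tower and its multiarrangement body: one must ensure that the simple-arrangement inequality of~\cite{Yos05,AY13} is available in the generality needed (arbitrary rank and arbitrary $X_1$), and---via the identity $\sum_i d_i=|m|$---that equality in the single global bound~\eqref{flagEq} cannot hide slack at any intermediate step.
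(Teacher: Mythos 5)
Your proposal is correct and is essentially the paper's own argument: the paper likewise handles the first (simple) step with Theorem~\ref{thm:AY}, every subsequent step with Theorem~\ref{th:loc_heavy}, and uses the nonnegativity of each stepwise obstruction together with the telescoping identity $\left|m^{X_i}\right|=\sum_{j= i}^{\ell-1}m^{X_j}(X_{j+1})$ to convert the single equality~\eqref{flagEq} into the conjunction of stepwise equalities, so your induction on rank is just the paper's iteration packaged recursively. The only divergence is the secondary supersolvable/divisional-flag claim, which the paper settles by citing Proposition 4.2 of \cite{Abe17} rather than by the compatibility-of-characteristic-polynomials verification you sketch.
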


We remark that not every supersolvable arrangement admits a locally heavy flag; for instance the simple braid arrangement $A_3$ underlying Example~\ref{ex:1} is supersolvable but does not have a locally heavy Euler--Ziegler restriction.

As an illustration of Theorem~\ref{th:flag}, we give an example of an arrangement with a locally heavy flag in $LHF_4$ and determine its freeness combinatorially:
\begin{example}
Let $\A$ in $V=\K^4$ be defined by
\[
x(x-z+w)(x-z-w)(y-w)(y+w)(y-z)z(z+w)(z-w)w=0,
\]
Then, $\A$ has a locally heavy flag $X_1\coloneqq\{w=0\} \in \A$, $X_2\coloneqq\{w=z=0\} \in (\A^{X_1},m^{X_1})$ and $X_3\coloneqq\{w=z=y=0\} \in (\A^{X_2},m^{X_2})$, and $X_4\coloneqq 0$.
Then it holds that $m^{X_1}(X_2)=3$, $m^{X_2}(X_3)=3$ and $m^{X_3}(X_4)=3$.
So the RHS of~\eqref{flagEq} evaluates to $1(3+3+3)+3(3+3)+3\cdot3=36$.

We compute $b_2(\A)$ via Definition~\ref{def:b2}.
There are $21$ rank two flats with $|\A_X|=2$, six with $|\A_X|=3$, one with $|\A_X|=4$.
Thus, we obtain $b_2(\A)=21+6\cdot 2+3=36$.
Therefore, Theorem~\ref{th:flag} implies that $\A$ is free and any arrangement with the same intersection lattice as $\A$ is free as well.
\end{example}

This article is organized as follows.
In Section~\ref{sec:pre}, we recall definitions and results relevant for the proofs of the main results.
These proofs will be given in Section~\ref{sec:proof}.
Subsequently, we will consider two applications of the locally heaviness techniques.
Firstly, we will show in Section~\ref{sec:generic} that arrangements with a generic hyperplane are totally nonfree arrangements.
Secondly, we will consider multiarrangements with multiple locally heavy hyperplanes in Section~\ref{sec:multiple}.

\textbf{Acknowledgments}
We thank the anonymous referee for carefully reading an earlier version of this article and for giving many useful suggestions how to improve the exposition.
The first author is partially supported by JSPS KAKENHI Grant-in-Aid for Scientific Research (B) 16H03924.
The second author is supported by ERC StG 716424 - CASe and a Minerva Fellowship of the Max Planck Society.

\section{Preliminaries}\label{sec:pre}

In this section we fix some notations and introduce known results, which will be used in the following proofs.

Let $V$ be a vector space of dimension $\ell$ over a field $\K$ and $S\coloneqq S(V^*)$ be the symmetric algebra. We can choose coordinates ${x_1,\ldots,x_\ell}$ for $V^*$ such that $S=\K[x_1,\ldots,x_\ell]$. A hyperplane $H$ in $V$ is a linear subspace of codimension 1. A (central) \textbf{arrangement of hyperplanes} $\A$ is a finite collection of hyperplanes. This article is mainly concerned with \textbf{multiarrangements}, which are defined to be an arrangement of hyperplanes $\A$ with a multiplicity function $m : \A \rightarrow \Z_{>0}$. Multiarrangements were first defined by Ziegler in~\cite{Zie89} and are denoted by $\Am$.
Define $|m|\coloneqq \sum_{H\in\A} m(H)$. A multiarrangement $\Am$ with $m(H)=1$ for all $H\in\A$ is called a simple hyperplane arrangement.
For each hyperplane $H$ we can choose a linear defining equation $\alpha_H \in S$. Then $\Q \Am \coloneqq \prod_{H\in \A}\alpha_H^{m(H)}$ is the \textbf{defining polynomial} of a multiarrangement $\Am$.
For $L\in\A$ we define the \textbf{characteristic multiplicity} $\delta_L : \A \rightarrow \Z_{>0}$ of $L$ by setting $\delta_L(H)=1$ if $H=L$ and 0 otherwise.

For an arrangement $\A$ the set of all non-empty intersections of elements of $\A$ is defined to be the \textbf{intersection lattice} $L(\A)$, i.e., 
$$
L(\A)\coloneqq\{\cap_{H \in \B} H \mid \B \subset \A\}.
$$
It is ordered by reverse inclusion and ranked by the codimension. 
Denote by $L_r(\A)\coloneqq \lbrace X\in L(\A) \mid \codim(X)=r \rbrace$ the set of $X\in L(\A)$ with codimension $r$.
For any $X\subset V$ let $(\A_X,m_X)$ be the \textbf{localization} of $\Am$ at $X$ by defining $\A_X \coloneqq \lbrace  H \in \A \mid X\subseteq H\rbrace$ and $m_X\coloneqq m|_{\A_X}$.

A central notion of this article is the freeness of a (multi-)arrangement.
The $S$-module $D\Am$ is the \textbf{module of logarithmic derivations} of $\Am$ defined as
\[D \Am \coloneqq\lbrace \theta\in \Der(S) \mid \theta(\alpha_H) \in \alpha_H^{m(H)} S  \mbox{ for all }H\in \A \rbrace,\]
where $\Der(S)$ is the module of all derivations on $S$.
If $D\Am$ is a free $S$-module, we call $\Am$ a \textbf{free multiarrangement}. 
In the case of a free multiarrangement $\Am$ one can choose a homogeneous basis $\theta_1 , \ldots , \theta_\ell$ of $D\Am$.
In this case we define $\exp \Am =(\pdeg \theta_1 , \ldots, \pdeg \theta_\ell )$ to be the \textbf{exponents} of $\Am$ where a derivation $\theta\in \Der(S)$ is homogeneous with $\pdeg \theta = d$ if $\theta(\alpha)$ is a homogeneous polynomial of degree $d$ for any $\alpha\in V^*$.

A useful result to decide the freeness of a multiarrangement is \textbf{Saito's criterion} first proved by K. Saito for simple arrangements~\cite[Theorem 1.8]{Sai80} and by Ziegler for multiarrangements~\cite[Theorem 8]{Zie89}.
For $\theta_1,...,\theta_\ell\in D\Am$ define an $(\ell \times \ell)$-matrix $M(\theta_1,...,\theta_\ell)$ by setting the $(i,j)$-th entry to be $\theta_j(x_i)$.

\begin{theorem}\label{thm:saito}
Let  $\theta_1,...,\theta_\ell$ be derivations in $D\Am$. Then there exists some $f\in S$ such that
\[ \det M(\theta_1,...,\theta_\ell) = f \Q\Am.\] 
Furthermore, the family $(\theta_1,...,\theta_\ell)$ with $\theta_i\in D\Am$ forms a basis of $D\Am$ if and only if $f \in \K^* $.
In particular, if $\theta_1,...,\theta_\ell$ are all homogeneous, then $(\theta_1,...,\theta_\ell)$ forms a basis of $D\Am$ if and only if the following two conditions are satisfied:
\begin{itemize}
\item[(i)] $\theta_1,...,\theta_\ell$ are independent over $S$.
\item[(ii)] $\sum_{i=1}^\ell \pdeg \theta_i =|m|$.
\end{itemize}  
\end{theorem}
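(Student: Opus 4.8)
The plan is to prove the statement in four steps: first the divisibility $\Q\Am \mid \det M(\theta_1,\dots,\theta_\ell)$ that produces $f$, then each implication of the criterion, and finally the homogeneous specialization. For the divisibility, fix $H \in \A$ and choose a basis $y_1,\dots,y_\ell$ of $V^*$ with $y_1 = \alpha_H$. Passing from the coordinates $x_i$ to the $y_i$ is a $\K$-linear change, so $\det(\theta_j(y_i))_{i,j} = c\cdot \det M(\theta_1,\dots,\theta_\ell)$ for some $c \in \K^*$. In the matrix $(\theta_j(y_i))$ every entry of the first row is $\theta_j(\alpha_H) \in \alpha_H^{m(H)}S$, so $\alpha_H^{m(H)}$ divides its determinant and hence $\det M$. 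As the $\alpha_H$ are pairwise non-associate irreducibles and $S$ is a unique factorization domain, these divisibilities multiply to give $\Q\Am \mid \det M$; write $\det M = f\,\Q\Am$ with $f \in S$.

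For the forward implication, assume $f \in \K^*$. Then $\det M \neq 0$, so $\theta_1,\dots,\theta_\ell$ are linearly independent over the fraction field of $S$, and a fortiori over $S$. Given any $\theta \in D\Am$, Cramer's rule yields $\theta = \sum_j g_j \theta_j$ with $g_j = \det M_j/\det M$, where $M_j$ is obtained from $M$ by replacing its $j$-th column with $(\theta(x_1),\dots,\theta(x_\ell))^\top$. Each $M_j$ is the matrix attached to the family $\theta_1,\dots,\theta_{j-1},\theta,\theta_{j+1},\dots,\theta_\ell$ of elements of $D\Am$, so the divisibility step applies and gives $\Q\Am \mid \det M_j$; therefore $g_j \in S$. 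Hence the $\theta_j$ generate $D\Am$ and form a basis.

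For the backward implication, assume $(\theta_1,\dots,\theta_\ell)$ is a basis; I argue that $f$ is divisible by no irreducible of $S$. Localize at a height-one prime $\mathfrak{p}=(p)$. Since $D\Am$ is the kernel of the map $\Der(S) \to \bigoplus_{H}S/\alpha_H^{m(H)}S$ sending $\theta$ to $(\theta(\alpha_H) \bmod \alpha_H^{m(H)})_H$, and localization is exact, $(D\Am)_\mathfrak{p}$ consists of those $\theta \in \Der(S)_\mathfrak{p}$ with $\theta(\alpha_H) \in \alpha_H^{m(H)}S_\mathfrak{p}$ for all $H$. If $p$ is not associate to any $\alpha_H$, then every $\alpha_H$ is a unit in $S_\mathfrak{p}$ and $(D\Am)_\mathfrak{p}=\Der(S)_\mathfrak{p}$ is free on $\partial_1,\dots,\partial_\ell$; if $p$ is associate to $\alpha_H$, only that one condition survives and $(D\Am)_\mathfrak{p}$ is free on $\alpha_H^{m(H)}\partial_1,\partial_2,\dots,\partial_\ell$ in coordinates with $\alpha_H = x_1$. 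Because a basis of a free module localizes to a basis, the transition determinant between $(\theta_j)$ and the explicit local basis is a unit in $S_\mathfrak{p}$, and a direct computation shows that this determinant equals $f$ times a unit in both cases. Thus $v_\mathfrak{p}(f)=0$ for every height-one $\mathfrak{p}$, and since $S$ is a unique factorization domain we conclude $f \in \K^*$.

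Finally, in the homogeneous case $\det M$ is homogeneous of degree $\sum_i \pdeg \theta_i$ while $\Q\Am$ has degree $|m|$, so condition (i) is equivalent to $f \neq 0$ and, given (i), condition (ii) is equivalent to $\deg f = 0$, that is $f \in \K^*$; the claim then follows from the main equivalence. The step requiring the most care is the backward implication, where one must identify the localizations $(D\Am)_\mathfrak{p}$ at all height-one primes explicitly; once this local structure is in hand, everything else reduces to linear algebra over the unique factorization domain $S$.
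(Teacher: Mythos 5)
Your proof is correct. One important contextual point: the paper does not prove Theorem~\ref{thm:saito} at all --- it is quoted as a known result, due to Saito in the simple case and Ziegler in the multiarrangement case --- so there is no in-paper proof to compare against; the comparison must be with the classical arguments. Measured against those, your divisibility and sufficiency steps are the standard ones: adapting coordinates so that $y_1=\alpha_H$, noting the coordinate change only rescales $\det M$ by an element of $\K^*$, extracting $\alpha_H^{m(H)}$ from the first row, and multiplying the pairwise non-associate factors in the UFD $S$ gives $\Q\Am \mid \det M$; then Cramer's rule together with the same divisibility applied to the modified matrices $M_j$ (which are Saito matrices of families in $D\Am$) shows that $f\in\K^*$ forces $\theta_1,\dots,\theta_\ell$ to generate. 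For the necessity direction your route is a genuine (and clean) alternative to the usual bookkeeping: the classical treatments typically compare with the derivations $\Q\Am\,\partial_j \in D\Am$ to get $f \mid \Q\Am^{\ell-1}$ and then still need an order count along each $\alpha_H$; you instead localize at every height-one prime $\mathfrak{p}=(p)$, identify $(D\Am)_\mathfrak{p}$ explicitly (free on $\partial_1,\dots,\partial_\ell$ when $p$ is associate to no $\alpha_H$, free on $\alpha_H^{m(H)}\partial_1,\partial_2,\dots,\partial_\ell$ when $p\sim\alpha_H$, using that localization commutes with the kernel description of $D\Am$), and conclude $v_\mathfrak{p}(f)=0$ everywhere from the fact that bases of a free module differ by a unit transition determinant; this buys a uniform argument with less computation, at the cost of invoking a little commutative algebra. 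Your homogeneous specialization is also right: $\det M$ is homogeneous of degree $\sum_i \pdeg\theta_i$, hence $f$ is homogeneous, condition (i) is equivalent to $f\neq 0$, and given (i), condition (ii) is equivalent to $\deg f=0$, i.e.\ $f\in\K^*$. The two small points a referee might ask you to make explicit --- that a basis consists of $S$-independent elements so $f\neq 0$ before valuations are taken, and that localization preserves bases --- are both implicitly handled by your unit-transition-determinant argument, so there is no gap.
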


Next, we review the addition-deletion theorem for multiarrangements. Let $\Am$ be a multiarrangement and $H_0\in\A$ a fixed hyperplane. 
The deletion $(\A',m')$ of $\Am$ with respect to $H_0$ is defined as:

\begin{enumerate}
\item[(1)] If $m(H_0)=1$ then $\A'\coloneqq\A\setminus \lbrace H_0 \rbrace$ and $m'(H)=m(H)$ for all $H\in\A'$.
\item[(2)] If $m(H_0)\geq 2$ then $\A'\coloneqq\A$ and for $H\in\A'=\A$, we set
\[m'(H)\coloneqq
\left\{
	\begin{array}{ll}
		m(H)  & \mbox{if }  H \neq H_0,\\
		m(H_0) -1  & \mbox{if } H = H_0.
	\end{array}
\right.
\]
\end{enumerate}
The restricted arrangement is defined by $\A^{H_0}\coloneqq \lbrace H_0 \cap H\mid H\in \A\setminus \lbrace H_0\rbrace \rbrace$. 
Now for $X\in \A^{H_0}$, the arrangement $(\A_X,m_X)$ is of rank $2$ and therefore always free (cf.\ \cite[Corollary 7]{Zie89}).
Hence, we can choose a basis $\lbrace \zeta_1,\ldots,\zeta_{\ell -2}, \theta_X, \psi_X \rbrace$ of $D(\A_X,m_X)$ such that $\pdeg \zeta_i =0$, $\theta_X \not\in \alpha_{H_0} \Der_\K (S)$ and $\psi_X \in \alpha_{H_0} \Der_\K (S)$ (cf.\ \cite[Proposition 2.1]{ATW08}).
Then the \textbf{Euler multiplicity} $m^*$ 
on $\A^{H_0}$ is defined as $m^*(X) \coloneqq \pdeg \theta_X$ and $(\A^{H_0},m^*)$ is called the \textbf{Euler restriction} of $\Am$.
For the Euler restriction, one has the following result.
\begin{theorem}\cite[Theorem 0.8]{ATW08}\label{thm:add_del}\textbf{(Addition-Deletion Theorem for multiarrangements)}
Let $\Am$ be a multiarrangement and $H_0\in\A$.
Then any two of the following three statements imply the third:
\begin{enumerate}[(1)]
\item\label{6.5a} $\Am$ is free with $\exp\Am=(d_1,...,d_{\ell-1},d_\ell)$.
\item\label{6.5b} $(\A',m')$ is free with $\exp(\A',m') =(d_1,...,d_{\ell-1},d_\ell-1)$.
\item\label{6.5c} $(\A^{H_0},m^*)$ is free with $\exp(\A^{H_0},m^*)=(d_1,...,d_{\ell-1})$.
\end{enumerate}
\end{theorem}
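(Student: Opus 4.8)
The plan is to adapt Terao's classical addition--deletion argument to multiarrangements, with the restriction of logarithmic derivations to $H_0$ as the central tool. Fix a defining form $\alpha_0\coloneqq\alpha_{H_0}$ and coordinates in which $\alpha_0=x_1$, and write $\bar S\coloneqq S/\alpha_0 S$ for the coordinate ring of $H_0$. Every $\theta\in D(\A,m)$ satisfies $\theta(\alpha_0)\in\alpha_0^{m_0}S\subseteq\alpha_0 S$, so it preserves the ideal $\alpha_0 S$ and descends to a derivation $\pi(\theta)$ of $\bar S$. The backbone of the proof is the graded exact sequence
\[
0\longrightarrow D(\A',m')\xrightarrow{\ \cdot\alpha_0\ }D(\A,m)\xrightarrow{\ \pi\ }D(\A^{H_0},m^*),
\]
in which $\cdot\alpha_0$ raises polynomial degree by one while $\pi$ preserves it. I would treat the case $m_0\ge 2$ (so $\A'=\A$) in detail and note that the case $m_0=1$, where $\A'=\A\setminus\{H_0\}$, is entirely parallel and recovers Terao's theorem for simple arrangements.

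The first task is to verify that $\pi$ lands in $D(\A^{H_0},m^*)$, which is where the definition of the Euler multiplicity enters. This is a local statement along each $X\in\A^{H_0}$, so I would localize at $X$ and use the rank--two splitting $\{\zeta_1,\dots,\zeta_{\ell-2},\theta_X,\psi_X\}$ of $D(\A_X,m_X)$ recalled before Theorem~\ref{thm:add_del}. Writing $\theta\in D(\A,m)\subseteq D(\A_X,m_X)$ in this basis, the summand in $\psi_X\in\alpha_0\Der_\K(S)$ dies under $\pi$, the degree--zero fields $\zeta_i$ restrict to degree--zero derivations of $\bar S$, and $\theta_X$ restricts to the generator of the nontrivial direction, of degree exactly $m^*(X)=\pdeg\theta_X$. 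Hence $\pi(\theta)$ vanishes to order $m^*(X)$ along $X$, i.e. $\pi(\theta)\in D(\A^{H_0},m^*)$; the Euler multiplicity is defined precisely to make this true. Well--definedness of $\cdot\alpha_0$ is immediate from the definition of $(\A',m')$.

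Next I would compute the kernel of $\pi$. In the chosen coordinates $\pi(\theta)=0$ forces every tangential coefficient of $\theta$ to be divisible by $\alpha_0$, and together with $\theta(\alpha_0)\in\alpha_0^{m_0}S$ this yields $\theta=\alpha_0\eta$ with $\eta\in D(\A',m')$; thus $\ker\pi=\alpha_0\,D(\A',m')$ and the sequence above is exact. Passing to Poincaré series $\Poin(M,t)\coloneqq\sum_d\dim_\K M_d\,t^d$ it gives the identity $\Poin\big(D(\A,m),t\big)=t\,\Poin\big(D(\A',m'),t\big)+\Poin\big(\im\pi,t\big)$, and, since $\im\pi\subseteq D(\A^{H_0},m^*)$, the coefficientwise bound $\Poin(\im\pi,t)\le\Poin(D(\A^{H_0},m^*),t)$ with equality exactly when $\pi$ is surjective. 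A direct computation, using that $\bar S$ has one fewer variable, shows that the three free Poincaré series built from exponents $(d_1,\dots,d_\ell)$, $(d_1,\dots,d_{\ell-1},d_\ell-1)$ and $(d_1,\dots,d_{\ell-1})$ satisfy this identity with equality, so the sequence is short exact precisely in the free situation. In each of the three cases two of the modules are assumed free, their Poincaré series are then explicit, and once $\pi$ is known to be surjective the third module acquires the Poincaré series of a free module with the asserted exponents; exhibiting $\ell$ independent homogeneous derivations of total degree $|m|$, built from $\alpha_0$ times a basis of $D(\A',m')$ and from lifts along $\pi$ of a basis of $D(\A^{H_0},m^*)$, and invoking Saito's criterion (Theorem~\ref{thm:saito}) upgrades this to genuine freeness and fixes the exponents, with the degree relations $|m|=|m'|+1$ falling out of the bookkeeping.

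The step I expect to be the main obstacle is the surjectivity of $\pi$, equivalently the vanishing of $\coker\pi$. Left--exactness and the identification $\ker\pi=\alpha_0 D(\A',m')$ are formal, and $\im\pi\subseteq D(\A^{H_0},m^*)$ is forced by the very definition of the Euler multiplicity; but the coefficientwise Poincaré bound only gives an inequality, and kernels, images and cokernels of maps between free modules need not be free, so the numerics alone cannot close the argument. The resolution is local--to--global: at every rank--two flat $X\in\A^{H_0}$ the explicit basis $\{\zeta_i,\theta_X,\psi_X\}$ shows that $\pi$ is surjective after localizing, so $\coker\pi$ is supported in codimension at least three, and the freeness, hence reflexivity, of $D(\A^{H_0},m^*)$ (or of $\im\pi$ in the deletion case) promotes this generic and codimension--one surjectivity to surjectivity everywhere via a depth argument. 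Making this depth argument precise, and checking it separately in each of the three cases where a different pair of modules is assumed free, is the technical heart of the theorem.
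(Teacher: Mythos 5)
First, a remark on the comparison target: the paper does not prove Theorem~\ref{thm:add_del} at all --- it is imported verbatim from \cite{ATW08} and used as a black box (e.g.\ in Corollary~\ref{cor:free}). So your proposal can only be measured against the original proof of Abe--Terao--Wakefield. Your skeleton does match the architecture of that proof: the exact sequence $0 \to D(\A',m') \xrightarrow{\ \cdot\alpha_0\ } D(\A,m) \xrightarrow{\ \pi\ } D(\A^{H_0},m^*)$, the verification via the rank-two bases $\{\zeta_i,\theta_X,\psi_X\}$ that $\pi$ lands in $D(\A^{H_0},m^*)$ (this is exactly \cite[Propositions 2.1--2.2]{ATW08}), the identification $\ker\pi=\alpha_0 D(\A',m')$, and the Poincar\'e series bookkeeping are all correct.

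The genuine gap is the step you yourself flag as the heart: surjectivity of $\pi$, and the mechanism you propose for it does not work. Local surjectivity at height-one primes of $S/\alpha_0 S$ plus reflexivity (or even freeness) of the \emph{target} $D(\A^{H_0},m^*)$ does not imply surjectivity: the image could a priori be something like $I\cdot D(\A^{H_0},m^*)$ for an ideal $I$ of height at least $2$, whose cokernel is also supported in codimension at least $2$. What the depth-lemma argument genuinely requires is $\depth \ge 2$ of the \emph{image} $\im\pi \cong D(\A,m)/\alpha_0 D(\A',m')$ at the associated primes of the cokernel. That depth is available precisely when $D(\A,m)$ and $D(\A',m')$ are both free (then $\im\pi$ is maximal Cohen--Macaulay over $S/\alpha_0S$, hence free, and your argument does close the implication $(1)+(2)\Rightarrow(3)$). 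But in the addition direction $(2)+(3)\Rightarrow(1)$ this makes your plan circular: to produce the ``lifts along $\pi$ of a basis of $D(\A^{H_0},m^*)$'' you need $\pi$ surjective, while controlling $\im\pi$ requires freeness of $D(\A,m)$ --- exactly the conclusion sought; a similar shortfall occurs for $(1)+(3)\Rightarrow(2)$, where the depth lemma only yields $\depth(\im\pi)\ge 1$. This is not a removable inconvenience: the exponent-matching hypotheses must enter the surjectivity proof in an essential way, and in Terao's classical argument, which \cite{ATW08} adapts, the addition direction runs through a \emph{second} exact sequence $0\to D(\A,m)\to D(\A',m')\xrightarrow{\ \lambda\ } S/\alpha_0S$ with $\lambda(\theta)=\overline{\theta(\alpha_0)/\alpha_0^{m_0-1}}$, combined with the nontrivial local inequality $m^*(X)\le |m_X|-m(H_0)$, which makes $B\coloneqq \Q(\A',m')|_{H_0}/\Q(\A^{H_0},m^*)$ a polynomial with $\im\lambda\subseteq B\cdot(S/\alpha_0S)$; freeness of $D(\A,m)=\ker\lambda$ is then forced by a projective-dimension count on this sequence. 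Your proposal contains no substitute for this second sequence, so the addition case --- and with it the full ``any two imply the third'' statement --- remains unproven.
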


Another way of defining a multiplicity on the restriction of a multiarrangement $\Am$ to a hyperplane $H_0$ is the \textbf{Euler--Ziegler restriction} $(\A^{H_0},m^{H_0})$ defined by setting
\[
m^{H_0}(X) = |m_X| - m(H_0)
\] 
for any $X\in \A^{H_0}$ (cf. also~\cite{AK18}).
For a simple arrangement $\A$ this definition agrees with the classical \textbf{Ziegler restriction} (cf.~\cite[Example 2]{Zie89}) which we therefore take as its definition for this article.

If $H_0$ is a locally heavy hyperplane in $\Am$, we have $m^*(X)=m^{H_0}(X)$ by~\cite[Proposition 4.1]{ATW08}.
 Hence in the locally heavy case, the Euler restriction $(\A^{H_0},m^*)$ coincides with the Euler--Ziegler restriction $(\A^{H_0},m^{H_0})$.

Much of our work on locally heavy multiarrangements is motivated by and modeled after results on simple arrangements.
The corresponding result to Theorem~\ref{th:loc_heavy} is the following.
\begin{theorem}\cite[Theorem 5.1]{AY13}\label{thm:AY}
Let $\A$ be an arrangement with a fixed hyperplane $H_0\in \A$ and assume that $\ell \geq 3$. Let $(\A^{H_0},m^{H_0})$ be the Ziegler restriction onto $H_0$. Then it holds that 
\[b_2(\A) - (|\A|-1) \ge b_2 \left( \A^{H_0},m^{H_0} \right).\]
Moreover, 
$\A$ is free if and only if the above inequality is an equality, and 
$(\A^{H_0},m^{H_0})$ is free.
\end{theorem}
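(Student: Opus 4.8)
The plan is to prove the inequality and the freeness characterization together, by reducing the global statement to the rank-three case at each rank-two flat of the restriction and then invoking Yoshinaga's freeness criterion. Throughout I identify $b_2$ of a free (multi)arrangement with the second elementary symmetric polynomial of its exponents, which follows from the factorization of its characteristic polynomial and Definition~\ref{def:b2}.

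First I would dispose of the ``only if'' direction. Since $\A$ is simple, the Euler derivation lies in $D(\A)$, so a free $\A$ has exponents of the form $(1,d_2,\dots,d_\ell)$, and Ziegler's theorem that the Ziegler restriction of a free arrangement is free gives that $(\A^{H_0},m^{H_0})$ is free with $\exp(\A^{H_0},m^{H_0})=(d_2,\dots,d_\ell)$; alternatively one obtains this by combining the coincidence $m^{H_0}=m^*$ with repeated use of the Addition--Deletion Theorem~\ref{thm:add_del}. Expanding the two elementary symmetric polynomials and using $|\A|=1+\sum_{j\ge 2}d_j$ then yields $b_2(\A)=(|\A|-1)+b_2(\A^{H_0},m^{H_0})$, so the inequality holds with equality.

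The inequality itself I would obtain by a local decomposition. The identity $b_2(\A)=\sum_{X\in L_2(\A)}(|\A_X|-1)$ splits according to whether $X\subseteq H_0$: the flats with $X\subseteq H_0$ account for exactly $|\A|-1$, while each flat $X\not\subseteq H_0$ maps to the rank-two flat $Y\coloneqq X\cap H_0$ of $\A^{H_0}$. Grouping the latter over $Y$, the fiber sum is precisely $b_2(\A_Z)-(|\A_Z|-1)$ for the rank-three localization $\A_Z$ at $Z\coloneqq Y$, and Yoshinaga's rank-three criterion gives $b_2(\A_Z)-(|\A_Z|-1)\ge d_Y^{(1)}d_Y^{(2)}$. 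Summing over all $Y\in L_2(\A^{H_0})$ reproduces $b_2(\A^{H_0},m^{H_0})$ on the right, establishing the inequality and showing moreover that equality holds if and only if every such rank-three localization $\A_Z$ is free.

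The remaining and hardest step is the ``if'' direction: that freeness of $(\A^{H_0},m^{H_0})$ together with equality --- equivalently, freeness of all the rank-three localizations $\A_Z$ --- forces $\A$ itself to be free. For $\ell=3$ this is exactly Yoshinaga's original criterion; for $\ell\ge 4$ I expect the main obstacle to be bridging from the local (codimension-three) freeness data to global freeness. The plan here is to pass to the logarithmic tangent sheaf on $\mathbb{P}^{\ell-1}$: it is reflexive, hence locally free away from codimension three, and the freeness of every rank-three localization upgrades it to a genuine vector bundle, while freeness of the Ziegler restriction controls the cohomology along $H_0$. A Horrocks-type splitting argument --- the vanishing of the intermediate cohomology encoded by the now-vanishing $b_2$-defect --- would then show that the sheaf splits into line bundles and hence that $D(\A)$ is free, and Saito's criterion (Theorem~\ref{thm:saito}) would let us read off the exponents. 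Controlling this cohomology, and in particular ensuring that no higher-codimension obstruction survives when $\ell\ge 4$, is the crux of the argument.
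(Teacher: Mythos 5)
Note first that the paper does not prove this statement at all: it is quoted verbatim from \cite{AY13} (Theorem 5.1 there), so your attempt has to be measured against the proof in that reference. Your first two steps are correct and essentially coincide with it. The ``only if'' direction via the Euler derivation, Ziegler's theorem \cite{Zie89}, and the identification of $b_2$ of a free (multi)arrangement with the second elementary symmetric polynomial of the exponents is fine. The inequality is also handled as in \cite{AY13}: the rank-two flats inside $H_0$ contribute exactly $|\A|-1$; the remaining flats fiber over $Y\in L_2(\A^{H_0})$ with fiber sum $b_2(\A_Y)-(|\A_Y|-1)$; the Ziegler restriction commutes with localization, so the rank-three criterion of \cite{Yos05} applied at each $Y$ gives the termwise inequality, and equality holds iff every localization $\A_Y$, $Y \in L_2(\A^{H_0})$, is free.

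The genuine gap is the ``if'' direction for $\ell\ge 4$, and your own last sentence concedes that the crux is missing. Moreover, the plan you sketch would not work as stated. First, the hypothesis only yields freeness of $\A_Z$ for codimension-three flats $Z$ \emph{contained in} $H_0$; it says nothing about localizations away from $H_0$, so the sheaf $\widetilde{D(\A)}$ on $\mathbb{P}^{\ell-1}$ is not shown to be locally free off $H_0$, and for $\ell\ge 5$ not even along $H_0$, since flats of codimension four and higher inside $H_0$ are not controlled by the $b_2$ equality. Hence no vector bundle. Second, even granting a bundle, a Horrocks-type splitting needs the vanishing of $H^i_*\bigl(\widetilde{D(\A)}\bigr)$ for all $1\le i\le \ell-2$, and you offer no mechanism for extracting this vanishing from a single numerical equality; that implication is precisely the content of the theorem, not an available tool.

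What actually closes the argument in \cite{AY13} is not a splitting theorem on $\mathbb{P}^{\ell-1}$ but Yoshinaga's criterion \cite{Yos04}: $\A$ is free if and only if the Ziegler restriction $(\A^{H_0},m^{H_0})$ is free and $\A$ is locally free along $H_0$, i.e.\ $\A_Z$ is free for every flat $Z\subset H_0$ with $Z\in L(\A)$. The codimension-three data you extracted is boosted to full local freeness along $H_0$ by induction on codimension: for $Z\subset H_0$ of codimension $k\ge 4$, the Ziegler restriction of $\A_Z$ onto $H_0$ is the localization of the free multiarrangement $(\A^{H_0},m^{H_0})$ at $Z$, hence free, and $\A_Z$ is locally free along $H_0$ by the inductive hypothesis (all relevant flats have codimension $<k$), so the criterion applied to $\A_Z$ gives its freeness; the final application of the same criterion to $\A$ itself finishes the proof. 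Without invoking \cite{Yos04} (or reproving it --- the hard cohomological work there is done by restricting to $H_0$ and comparing with the multirestriction module, not by splitting on $\mathbb{P}^{\ell-1}$), your argument does not close, and this missing induction step is itself a second, unacknowledged gap: equality only hands you the codimension-three localizations.
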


We restate two main results of~\cite{AK18} which will be used in the following proofs:

\begin{theorem}\cite[Theorem 1.2]{AK18}\label{th:heavy_thm}
		Let $(\A,m)$ be a multiarrangement and $H_0 \in \A$ a heavy hyperplane with $m_0\coloneqq m(H_0)$. Then
		\begin{equation}\label{eq:b2_heavy}
		b_2(\A,m)-m_0(|m|-m_0) \ge b_2(\A^{H_0},m^{H_0}).
		\end{equation}
		where $b_2\Am$ is the second Betti number of $\Am$ (cf.\ \cite[Definition 3.1]{ATW07}).
		Moreover, 
		$(\A,m)$ is free 
		if and only if the Euler--Ziegler restriction $(\A^{H_0},m^{H_0})$ of 
		$(\A,m)$ onto $H_0$ is free, and Inequality~\eqref{eq:b2_heavy} is satisfied with equality.
		In this case, 
		$\exp(\A,m)=(m_0,d_2,\ldots,d_\ell)$, where 
		$\exp(\A^{H_0},m^{H_0})=(d_2,\ldots,d_\ell)$.
\end{theorem}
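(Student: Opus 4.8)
My plan is to split the argument into a combinatorial rewriting of the left-hand side, a localisation step reducing the inequality to rank three, and a cohomological comparison of derivation modules that yields both the sharp inequality and the freeness criterion. First I would establish the identity interpreting the left-hand side of \eqref{eq:b2_heavy} as the part of the second Betti number lying away from $H_0$. Writing $b_2(\A,m)=\sum_{X\in L_2(\A)}b_2(\A_X,m_X)$ as in Definition~\ref{def:b2} and splitting according to whether $H_0\in\A_X$, I note that for a rank-two flat $X$ with $H_0\in\A_X$ the localisation $(\A_X,m_X)$ is a rank-two multiarrangement in which $H_0$ is heavy, so its exponents are $(m_0,\,|m_X|-m_0)$ and $b_2(\A_X,m_X)=m_0(|m_X|-m_0)$. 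Since each $L\neq H_0$ lies in the unique rank-two flat $L\cap H_0$, these contributions sum to $\sum_{X\ni H_0}m_0(|m_X|-m_0)=m_0(|m|-m_0)$, whence
\[
b_2(\A,m)-m_0(|m|-m_0)=\sum_{X\in L_2(\A),\ H_0\notin\A_X}b_2(\A_X,m_X).
\]

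Next I would reduce the inequality to rank three by localising at the rank-three flats through $H_0$. A rank-two flat of $\A^{H_0}$ is exactly a flat $Z\in L_3(\A)$ with $H_0\in\A_Z$, and its localisation in the restriction is the Euler--Ziegler restriction $\bigl((\A_Z)^{H_0},(m_Z)^{H_0}\bigr)$; dually, each rank-two flat $X$ with $H_0\notin\A_X$ determines the unique such $Z=X\cap H_0$, with $(\A_Z)_X=\A_X$. Grouping both sides of the displayed identity and of $b_2(\A^{H_0},m^{H_0})$ according to $Z$ (using Definition~\ref{def:b2} again), the claimed inequality \eqref{eq:b2_heavy} follows once one knows, for every such rank-three heavy multiarrangement $(\A_Z,m_Z)$, that
\[
\sum_{X\in L_2(\A_Z),\ H_0\notin(\A_Z)_X} b_2\bigl((\A_Z)_X,(m_Z)_X\bigr)\ \ge\ b_2\bigl((\A_Z)^{H_0},(m_Z)^{H_0}\bigr).
\]
Here I use throughout that, $H_0$ being heavy, the Euler multiplicity coincides with the Euler--Ziegler multiplicity, so all restriction data is combinatorial.

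The technical heart, which I expect to be the main obstacle, is this rank-three inequality together with the freeness equivalence, obtained by a cohomological comparison in the spirit of Theorem~\ref{thm:AY}. I would study the restriction map $\rho\colon D(\A,m)\to D(\A^{H_0},m^{H_0})$, whose kernel is controlled by the deletion $(\A',m')$, and sheafify the three modules on $\mathbb{P}^{\ell-1}$. The exact sequence of \cite{ATW08} underlying Theorem~\ref{thm:add_del} relates the characteristic polynomials of $(\A,m)$, $(\A',m')$ and $(\A^{H_0},m^{H_0})$; comparing the coefficients of $t^{\ell-2}$, which are the respective second Betti numbers, expresses the defect in \eqref{eq:b2_heavy} as the dimension of a first cohomology group of a naturally associated sheaf on $\mathbb{P}^{\ell-1}$. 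This dimension is nonnegative, giving the inequality, and it vanishes exactly when $(\A,m)$ is free and $(\A^{H_0},m^{H_0})$ is free. The heavy hypothesis is decisive in making the bound sharp precisely in the free case: it forces $m^{H_0}=m^{*}$ and pins down the rank-two local exponents.

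It remains to read off the exponents in the free case. Suppose $(\A^{H_0},m^{H_0})$ is free with $\exp=(d_2,\dots,d_\ell)$ and \eqref{eq:b2_heavy} holds with equality, so that $\rho$ is surjective in each relevant degree. Lifting a homogeneous basis of $D(\A^{H_0},m^{H_0})$ through $\rho$ gives $\theta_2,\dots,\theta_\ell\in D(\A,m)$ with $\pdeg\theta_j=d_j$, while the heavy hypothesis supplies a degree-$m_0$ derivation $\theta_1\in\ker\rho$ tangent to $H_0$ (the global analogue of the local generators $\psi_X$). Since $\theta_2,\dots,\theta_\ell$ restrict to an independent family and $\theta_1$ is a nonzero kernel element, the standard lifting argument shows $\theta_1,\dots,\theta_\ell$ are independent over $S$; as $\pdeg\theta_1+\sum_{j=2}^\ell\pdeg\theta_j=m_0+|m^{H_0}|=m_0+(|m|-m_0)=|m|$, Saito's criterion (Theorem~\ref{thm:saito}) shows they form a basis. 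Hence $(\A,m)$ is free with $\exp(\A,m)=(m_0,d_2,\dots,d_\ell)$.
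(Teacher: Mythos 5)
First, a point of reference: the paper does not prove this statement at all --- Theorem~\ref{th:heavy_thm} is imported verbatim from \cite[Theorem 1.2]{AK18} and used as a black box (the paper's own Theorem~\ref{th:loc_heavy} is proved by reducing to it via Corollary~\ref{cor:free} and Lemma~\ref{lem:b2}). So your attempt has to stand on its own, and it does not: the bookkeeping is right, but both pillars of the ``technical heart'' are asserted rather than proved, and one of them is circular. Your first two steps are correct: the identity $b_2(\A,m)-m_0(|m|-m_0)=\sum_{X\in L_2(\A),\,X\not\subset H_0}b_2(\A_X,m_X)$ is exactly Lemma~\ref{lem:b2} (a heavy hyperplane is locally heavy), and grouping the rank-two flats off $H_0$ by the rank-three flats $Z=X\cap H_0\subset H_0$, using that localization commutes with Euler--Ziegler restriction, validly reduces Inequality~\eqref{eq:b2_heavy} to rank three. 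But the rank-three inequality itself and the freeness equivalence are then delegated to a cohomological mechanism that does not exist in the form you invoke. Deletion--restriction fails for characteristic polynomials of multiarrangements, so the exact sequence $0\to D(\A',m')\xrightarrow{\cdot\alpha_0}D(\A,m)\xrightarrow{\rho}D(\A^{H_0},m^{*})$ of \cite{ATW08} does \emph{not} relate the coefficients of $t^{\ell-2}$ of the three characteristic polynomials, and there is no off-the-shelf identification of the defect in \eqref{eq:b2_heavy} with $h^1$ of a sheaf; building a usable substitute under the heavy hypothesis is precisely the content of \cite[Theorems 1.2 and 1.6]{AK18} (the latter quoted here as Theorem~\ref{th:b2_ineq}). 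In particular the phrase ``equality holds, so that $\rho$ is surjective in each relevant degree'' assumes exactly what the theorem claims.

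The second gap is the assertion that ``the heavy hypothesis supplies a degree-$m_0$ derivation $\theta_1\in\ker\rho$.'' This is false as stated, or circular. The good summand of degree $m_0$ exists only once $(\A,m)$ is known to be free: that is \cite[Theorem 3.4]{AK18}, and likewise Proposition~\ref{prop:good_summand}(1) of this paper, both of which take freeness as a hypothesis. Heaviness alone produces no low-degree derivation: since $\ker\rho=\alpha_0\,D(\A',m')$, a kernel element of degree $m_0$ is equivalent to an element of $D(\A',m')$ of degree $m_0-1$, whose existence is not automatic (the naive candidates, such as $\alpha_0^{m_0}\prod_{L\neq H_0}\alpha_L^{m(L)}\partial_{x_1}$, have degree roughly $|m|$). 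Constructing such an element from the hypotheses ``$(\A^{H_0},m^{H_0})$ free and equality in \eqref{eq:b2_heavy}'' is the hard direction of the theorem. Your concluding Saito-criterion computation ($m_0+\sum_{j=2}^{\ell}d_j=m_0+|m^{H_0}|=|m|$) is fine, but it rests on these two unproven inputs, so the proposal cannot be accepted as a proof.
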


\begin{theorem}\cite[Theorem 1.6]{AK18}\label{th:b2_ineq}
	Let $(\A,m)$ be a multiarrangement and $H \in \A$. Then 
	\[b_2(\A,m) - m(H)(|m|-m(H)) \ge b_2(\A^H,m^*),\] 
	where $(\A^H,m^*)$ is the Euler restriction of $(\A,m)$ onto $H$.
\end{theorem}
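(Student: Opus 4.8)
Write $m_0 := m(H)$. The plan is to decompose both sides of the inequality into contributions indexed by flats and then to compare them. By Definition~\ref{def:b2} we have $b_2(\A,m) = \sum_{X \in L_2(\A)} d_X^{(1)} d_X^{(2)}$, where $d_X^{(1)}, d_X^{(2)}$ are the two nonzero exponents of the rank-two localization $(\A_X, m_X)$, so that $d_X^{(1)} + d_X^{(2)} = |m_X|$. The first step is to rewrite the correction term: grouping the hyperplanes $L \neq H$ according to the codimension-two flat $H \cap L$ that they span with $H$ gives
\[
m_0\bigl(|m| - m_0\bigr) = \sum_{\substack{X \in L_2(\A) \\ H \in \A_X}} m_0\bigl(|m_X| - m_0\bigr).
\]
Splitting $L_2(\A)$ into the flats lying in $H$ and the flats avoiding $H$, the left-hand side of the asserted inequality becomes
\[
\sum_{\substack{X \in L_2(\A) \\ H \notin \A_X}} d_X^{(1)} d_X^{(2)} \;+\; \sum_{\substack{X \in L_2(\A) \\ H \in \A_X}} \bigl(d_X^{(1)} - m_0\bigr)\bigl(d_X^{(2)} - m_0\bigr),
\]
which is the precise form I would work with.

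For the right-hand side I would use that the rank-two flats of $\A^H$ correspond to the codimension-three flats $Z$ of $\A$ with $H \in \A_Z$, and that under this correspondence the localization of the restriction coincides with the restriction of the localization, $(\A^H)_Z = (\A_Z)^H$; the hyperplanes of the rank-two multiarrangement $(\A_Z)^H$ are exactly the codimension-two flats $X \supseteq Z$ with $H \in \A_X$, carrying the Euler multiplicities $m^*(X)$. The key auxiliary input is a purely rank-two estimate: for any rank-two multiarrangement with multiplicities $a_1,\ldots,a_k$ the product of its two exponents is bounded by the second elementary symmetric function, $d^{(1)} d^{(2)} \le \sum_{i<j} a_i a_j$ (equivalently $\bigl(d^{(1)}\bigr)^2 + \bigl(d^{(2)}\bigr)^2 \ge \sum_i a_i^2$, using $d^{(1)} + d^{(2)} = \sum_i a_i$). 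Applying this at every $Z$ and observing that two distinct codimension-two flats through $H$ always meet in a unique codimension-three flat through $H$, I obtain
\[
b_2\bigl(\A^H, m^*\bigr) \le \sum_{\{X, X'\}} m^*(X)\, m^*(X'),
\]
the sum running over unordered pairs of distinct codimension-two flats of $\A$ contained in $H$; crucially, no over-counting occurs in this passage.

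It remains to prove that the displayed left-hand side dominates this sum of pairwise products of Euler multiplicities. Since all three terms are governed by localizations at flats of codimension at most three, the natural strategy is to reduce to the rank-three case: localizing at a codimension-three flat $Z \subseteq H$ assembles the relevant data into the rank-three multiarrangement $(\A_Z, m_Z)$, at which the desired statement reads $b_2(\A_Z, m_Z) - m_0(|m_Z| - m_0) \ge b_2\bigl((\A_Z)^H, (m_Z)^*\bigr)$. In rank three there is a unique codimension-three flat, so this local inequality is already the full assertion and no bookkeeping is lost; the passage to arbitrary rank I would set up by induction, using the addition-deletion theorem (Theorem~\ref{thm:add_del}) to descend in the total multiplicity $|m|$ while keeping the flatwise decomposition intact.

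I expect the rank-three case to be the main obstacle. There $(\A^H, m^*)$ is a rank-two, hence free, multiarrangement with exponents $(e_1, e_2)$ and $b_2 = e_1 e_2$, but $(\A_Z, m_Z)$ itself need not be free, so no exponent bookkeeping is available and the inequality must be established directly. I would attack it by lifting a homogeneous basis of $D\bigl((\A_Z)^H, (m_Z)^*\bigr)$ to candidate logarithmic derivations of $(\A_Z, m_Z)$, adjoining the derivation $\alpha_H^{m_0}$ times the Euler field, and applying Saito's criterion (Theorem~\ref{thm:saito}) to bound the exponents, and hence $b_2(\A_Z, m_Z)$, from below. The delicate point throughout is the precise relationship between the Euler multiplicity $m^*(X)$ and the local exponents $d_X^{(1)}, d_X^{(2)}$, since this is what makes the rank-two estimate interface correctly with the rank-three derivation-module computation.
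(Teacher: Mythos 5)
The paper does not actually prove this statement: it is imported verbatim from \cite[Theorem 1.6]{AK18}, so there is no internal proof to compare against. Judged on its own terms, your proposal has a fatal gap.

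Your two preliminary reductions are correct: grouping the hyperplanes $L\neq H$ by the flat $H\cap L$ gives $m_0(|m|-m_0)=\sum_{X\subset H}m_0(|m_X|-m_0)$, hence the left-hand side equals $\sum_{X\not\subset H}d_X^{(1)}d_X^{(2)}+\sum_{X\subset H}(d_X^{(1)}-m_0)(d_X^{(2)}-m_0)$; the rank-two estimate $d^{(1)}d^{(2)}\le\sum_{i<j}a_ia_j$ is true; and summing it over $L_2(\A^H)$ does give $b_2(\A^H,m^*)\le\sum_{\{X,X'\}}m^*(X)m^*(X')$ with no over-counting. The gap is that the inequality you then set out to prove, namely that the left-hand side dominates this pair sum, is \emph{false}, so no rank-three analysis can establish it. The paper's own Example~\ref{ex:1} already refutes it: take $a=1$, $m_0=2$, so $\Q\Am=x(x-y)(x-z)y(y-z)z^{2}$ and $H=H_0=\{z=0\}$, which is locally heavy, whence $m^*=m^{H_0}$ has multiplicities $(2,2,1)$ on the three lines of $\A^{H_0}$. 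Then $b_2(\A,m)-m_0(|m|-m_0)=16-10=6$ and $b_2(\A^{H_0},m^{H_0})=\lfloor 5/2\rfloor\lceil 5/2\rceil=6$ (this $\Am$ is free, so Theorem~\ref{th:loc_heavy} holds with equality), while your pair sum is $2\cdot2+2\cdot1+2\cdot1=8>6$; more generally, for every $a$ the pair sum $8a^2$ exceeds the left-hand side $2a^2+2\lfloor 3a/2\rfloor\lceil 3a/2\rceil\le 13a^2/2$. The conceptual reason is that your rank-two bound is strict whenever a rank-two flat of $\A^H$ lies on at least three hyperplanes of $\A^H$ (equality forces exactly two), whereas the theorem itself is an \emph{equality} for every free multiarrangement with a locally heavy hyperplane; there is no slack on the left to absorb this loss, so replacing $b_2(\A^H,m^*)$ by a larger quantity is a step in the wrong direction.

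The proposed reduction to rank three does not repair this, and is in fact incoherent with the rest of the plan: in rank three you revert to proving the original inequality $b_2(\A_Z,m_Z)-m_0(|m_Z|-m_0)\ge b_2\bigl((\A_Z)^H,(m_Z)^*\bigr)$, which is strictly weaker than the pair-sum statement your global assembly needs (in the example above, which is of rank three, these read $6\ge 6$ versus $6\ge 8$). Moreover, even aiming directly at the original inequality, summing its rank-three instances over the codimension-three flats $Z\subset H$ does not yield the general case: the terms indexed by $X\not\subset H$ do distribute bijectively via $Z=X\cap H$, but each nonnegative term $(d_X^{(1)}-m_0)(d_X^{(2)}-m_0)$ with $X\subset H$ reappears in every localization $\A_Z$ with $Z\subset X$, so the sum of the local inequalities has an inflated left-hand side and does not imply the global claim. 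Finally, Theorem~\ref{thm:add_del} cannot drive an induction on $|m|$ here, since it requires freeness hypotheses that an arbitrary $(\A,m)$ does not satisfy; the proof cited from \cite{AK18} proceeds by different, algebraic means rather than by a flat-by-flat combinatorial bound of this kind.
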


Lastly, we quote a lemma relating the logarithmic derivations of a multiarrangement with the ones of its restriction.
\begin{lemma}\cite[Lemma 3.2]{AK18}\label{lem:d0}
Let $\Am$ be a multiarrangement and fix a hyperplane $H_0\in \A$ with $H_0=\ker \alpha_{H_0}$.
If $\delta\in D\Am$ and $\delta(\alpha_{H_0})=0$ then $\delta\mid_{H_0} \in D(\A^{H_0},m^{H_0})$.
\end{lemma}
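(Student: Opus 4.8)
The plan is to read the statement as a local, flat-by-flat verification of the logarithmic condition defining $D(\A^{H_0},m^{H_0})$. First I would record the mechanism behind the restriction $\delta\mid_{H_0}$: since $\delta(\alpha_{H_0})=0$, the derivation $\delta$ sends the ideal $(\alpha_{H_0})$ into itself, because $\delta(g\alpha_{H_0})=\delta(g)\alpha_{H_0}+g\delta(\alpha_{H_0})=\delta(g)\alpha_{H_0}$. Hence $\delta$ descends to a well-defined derivation $\bar\delta=\delta\mid_{H_0}$ of the coordinate ring $\bar S\coloneqq S/(\alpha_{H_0})$ of $H_0$, determined by $\bar\delta(\bar g)=\overline{\delta(g)}$ for any lift $g$. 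It then suffices to check, for each $X\in\A^{H_0}$, that $\bar\delta(\bar\alpha_X)\in\bar\alpha_X^{\,m^{H_0}(X)}\bar S$, where $\bar\alpha_X$ is the image in $\bar S$ of a defining form of $X$ and where, by definition of the Euler--Ziegler multiplicity, $m^{H_0}(X)=|m_X|-m(H_0)=\sum_{L\in\A_X,\,L\neq H_0}m(L)$.

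Next I would fix $X\in\A^{H_0}$ and list the hyperplanes of $\A$ through $X$ other than $H_0$ as $H_1,\dots,H_k$, so that $m^{H_0}(X)=\sum_{j=1}^k m(H_j)$. The heart of the argument is to promote the single logarithmic condition at $H_1$ into divisibility by the whole product. Since $X$ has codimension two and $\alpha_{H_1},\alpha_{H_0}$ are non-proportional forms vanishing on $X$, every $\alpha_{H_j}$ can be written as $\alpha_{H_j}=c_j\alpha_{H_1}+d_j\alpha_{H_0}$ with $c_j\neq 0$ (for $c_j=0$ would make $H_j=H_0$). Applying $\delta$ and using $\delta(\alpha_{H_0})=0$ gives $\delta(\alpha_{H_j})=c_j\,\delta(\alpha_{H_1})$; combined with $\delta(\alpha_{H_j})\in\alpha_{H_j}^{m(H_j)}S$ this shows $\delta(\alpha_{H_1})\in\alpha_{H_j}^{m(H_j)}S$ for every $j$, the case $j=1$ being the defining property of $D\Am$. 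As the $\alpha_{H_j}$ are pairwise non-associate irreducibles in the UFD $S$, the powers $\alpha_{H_j}^{m(H_j)}$ are pairwise coprime, so I conclude that $\delta(\alpha_{H_1})$ is divisible by $\prod_{j=1}^k\alpha_{H_j}^{m(H_j)}$.

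Finally I would pass to the restriction. Taking $\bar\alpha_X=\bar\alpha_{H_1}$, we have $\bar\delta(\bar\alpha_X)=\overline{\delta(\alpha_{H_1})}$, and applying the ring homomorphism $S\to\bar S$ to the divisibility above yields $\bar\delta(\bar\alpha_X)\in\bigl(\prod_{j}\bar\alpha_{H_j}^{m(H_j)}\bigr)\bar S$. Since each $H_j\neq H_0$, the image $\bar\alpha_{H_j}$ is a nonzero scalar multiple of $\bar\alpha_X$, so $\prod_j\bar\alpha_{H_j}^{m(H_j)}$ equals a nonzero scalar times $\bar\alpha_X^{\sum_j m(H_j)}=\bar\alpha_X^{\,m^{H_0}(X)}$. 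Hence $\bar\delta(\bar\alpha_X)\in\bar\alpha_X^{\,m^{H_0}(X)}\bar S$, which is exactly the required condition; as $X$ was arbitrary, $\delta\mid_{H_0}\in D(\A^{H_0},m^{H_0})$.

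I expect the main obstacle to be the second step, namely obtaining the full Euler--Ziegler exponent $\sum_j m(H_j)$ rather than merely the exponent $m(H_1)$ coming from a single hyperplane. This is precisely where the hypothesis $\delta(\alpha_{H_0})=0$ is indispensable, since it is what forces $\delta(\alpha_{H_j})$ to be a scalar multiple of $\delta(\alpha_{H_1})$ and thereby lets the individual divisibilities accumulate through the UFD structure of $S$; everything else is a formal consequence of the descent of $\delta$ to $H_0$.
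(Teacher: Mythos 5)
Your proof is correct. Note that the paper does not prove this lemma itself but cites it from [AK18, Lemma 3.2]; your argument is essentially the standard one used there (going back to Ziegler's restriction argument): descend $\delta$ to $S/(\alpha_{H_0})$, use $\delta(\alpha_{H_0})=0$ to write each $\delta(\alpha_{H_j})$ as a nonzero scalar multiple of $\delta(\alpha_{H_1})$, collect the pairwise-coprime divisibilities into divisibility by $\prod_j \alpha_{H_j}^{m(H_j)}$ in the UFD $S$ \emph{before} projecting, and only then pass to the quotient. You also correctly identified the one genuinely delicate point --- that projecting first would only yield the maximum of the exponents $m(H_j)$ rather than their sum --- and handled it in the right order.
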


\section{Proofs of the Main Results}\label{sec:proof}

The starting point of this article is the observation that locally heavy hyperplanes guarantee the existence of a distinguished derivation in its module of logarithmic derivations playing a similar role as the Euler derivation for simple hyperplane arrangements.
Such a derivation is called a \textbf{good summand} in~\cite{AK18}.
This statement is made precise in the following proposition which generalizes Theorem 3.4 in~\cite{AK18} from heavy hyperplanes to locally heavy hyperplanes.

\begin{prop}\label{prop:good_summand}
Let $\Am$ be a multiarrangement with a locally heavy hyperplane $H_0\in\A$, say $H_0=\{\alpha_0=0\}$ and set $m_0 \coloneqq  m(H_0)$.
Further assume that $\Am$ is free with $\exp\Am =(d_1,\ldots,d_{\ell})$. Choose a basis $\theta_1\ldots,\theta_{\ell}$ of $D\Am$ with $d_i=\pdeg \theta_i$ for all $1\le i\le\ell$. Then the following three statements hold:
\begin{enumerate}
\item[(1)] For some $1\le j\le\ell$ it holds that $ d_j=m_0$ and $\theta_j(\alpha_0)=\alpha_0^{m_0}$.
\item[(2)] There exists a basis $\theta'_1,\dots,\theta'_{\ell}$ of $D\Am$ such that $\theta'_j(\alpha_0)=\alpha_0^{m_0}$ and the projections to $S/\alpha_0 S$ of the remaining basis elements $\overline{\theta'_1},\ldots,\overline{\theta'_{j-1}},\overline{\theta'_{j+1}},\ldots,\overline{\theta'_{\ell}}$ are $S/\alpha_0 S$-independent.
\item[(3)] The Euler restriction $(\A^{H_0},m^*)$ is free.
\end{enumerate}
\end{prop}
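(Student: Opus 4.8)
The plan is to organise everything around the $S$-linear ``residue'' map
\[
\rho\colon D(\A,m)\longrightarrow S,\qquad \rho(\theta)=\theta(\alpha_0)/\alpha_0^{m_0},
\]
which is well defined since $\theta(\alpha_0)\in\alpha_0^{m_0}S$ for every $\theta\in D(\A,m)$ and $S$ is a domain. As $\rho$ lowers degree by $m_0$, the values $g_i\coloneqq\rho(\theta_i)$ are homogeneous of degree $d_i-m_0$ (and $g_i=0$ whenever $d_i<m_0$), and the image is the homogeneous ideal $I=(g_1,\dots,g_\ell)$. A good summand is exactly a $\theta$ with $\rho(\theta)=1$, i.e.\ a preimage of $1$; because $I$ is generated by homogeneous elements, $1\in I$ holds if and only if some $g_i$ is a nonzero constant, that is, if and only if $d_i=m_0$ and $\theta_i(\alpha_0)\in\K^*\alpha_0^{m_0}$. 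Thus statement~(1) is literally equivalent to $1\in I$, and I regard establishing this as the heart of the proposition; statements~(2) and~(3) should then follow formally.

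Granting a good summand, I first record how~(1) drops out. Writing a field $\theta^\ast$ with $\rho(\theta^\ast)=1$ in the given basis as $\theta^\ast=\sum_i f_i\theta_i$, homogeneity of degree $m_0$ forces $\deg f_i=m_0-d_i$, and evaluating on $\alpha_0$ yields $\sum_i f_ig_i=1$. Each $f_ig_i$ is homogeneous of degree $0$, so some $f_jg_j\in\K^\ast$; since $f_j\neq 0$ needs $d_j\le m_0$ while $g_j\neq 0$ needs $d_j\ge m_0$, this pins down $d_j=m_0$ with $g_j\in\K^\ast$. After rescaling $\theta_j$ (which keeps a basis) we obtain $\theta_j(\alpha_0)=\alpha_0^{m_0}$, which is~(1).

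For~(2) and~(3) I would use the good summand to straighten the basis and then read off everything from Saito's determinant. Replacing $\theta_i$ by $\theta_i'\coloneqq\theta_i-g_i\theta_j$ for $i\ne j$ (and $\theta_j'\coloneqq\theta_j$) is a unipotent, hence invertible, change of basis arranging $\theta_i'(\alpha_0)=0$ for all $i\ne j$. Choosing coordinates with $\alpha_0=x_1$, the $x_1$-row of $M(\theta_1',\dots,\theta_\ell')$ is $(0,\dots,0,\alpha_0^{m_0},0,\dots,0)$ with the entry in column $j$, so expanding along it and cancelling $\alpha_0^{m_0}$ against Saito's identity gives
\[
\det\bigl(\theta_i'(x_k)\bigr)_{i\ne j,\;k\ge 2}=c'\prod_{L\ne H_0}\alpha_L^{m(L)}\qquad(c'\in\K^\ast).
\]
By Lemma~\ref{lem:d0} the restrictions $\overline{\theta_i'}$ $(i\ne j)$ lie in $D(\A^{H_0},m^{H_0})$, and reducing the last display modulo $\alpha_0$ identifies the left-hand side with their Saito matrix on $H_0$, whose determinant is a nonzero scalar multiple of $\prod_{X}\overline{\alpha_X}^{\,|m_X|-m_0}$, i.e.\ of $\Q(\A^{H_0},m^{H_0})$. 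Nonvanishing of this determinant yields the $S/\alpha_0S$-independence claimed in~(2); and since $|m_X|-m_0=m^\ast(X)=m^{H_0}(X)$ for a locally heavy $H_0$, Saito's criterion (Theorem~\ref{thm:saito}) on $H_0$ shows $\{\overline{\theta_i'}\}_{i\ne j}$ is a basis, proving freeness of the Euler restriction in~(3).

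The remaining, and genuinely hard, point is the existence of the good summand, i.e.\ $1\in I$, and this is exactly where local heaviness must enter. Searching for a degree-$m_0$ field $\theta=\alpha_0^{m_0}\partial_{x_1}+\sum_{k\ge 2}h_k\partial_{x_k}$ reduces to solving the congruences $\theta(\alpha_L)\equiv 0\pmod{\alpha_L^{m(L)}}$ for $L\ne H_0$. These would decouple by the Chinese Remainder Theorem were the ideals $(\alpha_L^{m(L)})$ comaximal, but in dimension $\ge 2$ they are only pairwise coprime: comaximality fails precisely along the codimension-two flats $X=L\cap H_0$, so the congruences are coupled exactly among the hyperplanes of each localization $\A_X$ with $X\in\A^{H_0}$. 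On each such block the coupled system is the rank-two problem for $(\A_X,m_X)$, where $H_0$ is heavy by~\eqref{eq:local_heavy} (or $|\A_X|=2$, which is trivial); here the heavy case provides a degree-$m_0$ local solution, and this is the only place the hypothesis is used. I expect the main obstacle to be assembling these block solutions into one global homogeneous derivation of degree $m_0$: unlike the globally heavy situation of~\cite[Theorem~3.4]{AK18}, where the single bound $|m|-m_0\le m_0$ leaves room for one simultaneous correction, here one must patch block by block. My intended remedy is to feed freeness back in: reducing Saito's identity for the given basis modulo $\alpha_0$ produces a nonzero relation $\sum_j(-1)^{j+1}\overline{g_j}\,\overline{M_{1j}}=c\,\Q(\A^{H_0},m^{H_0})$ with $c\in\K^\ast$, which together with the block-wise local triviality of $I$ at every codimension-two flat should force $I=S$; pinning down this global step is the part I expect to require the most care.
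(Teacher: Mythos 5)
Your handling of (2) and (3) assuming (1), and your derivation of (1) from the existence of a good summand, are both correct; indeed your treatment of (2) and (3) (straighten the basis so that $\theta_i'(\alpha_0)=0$ for $i\neq j$, expand the Saito determinant along the $\alpha_0$-row, reduce modulo $\alpha_0$, and identify $\overline{\prod_{L\neq H_0}\alpha_L^{m(L)}}$ with $\Q(\A^{H_0},m^{H_0})$ up to a nonzero scalar) is essentially the paper's argument. But the heart of the proposition is precisely the step you leave open, namely $1\in I$, and neither of your two sketches closes it, so there is a genuine gap. The CRT/patching idea is moreover aimed at the wrong target: it tries to manufacture a degree-$m_0$ derivation with $\theta(\alpha_0)=\alpha_0^{m_0}$ from local data alone, without using freeness. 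That is strictly stronger than statement (1), which is an assertion about free $\Am$ only, and there is no reason the rank-two local solutions should patch; the freeness hypothesis must enter the proof of (1) somewhere.

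Your fallback relation $\sum_j(-1)^{j+1}\overline{g_j}\,\overline{M_{1j}}=c'\,\Q(\A^{H_0},m^{H_0})$ with $c'\in\K^*$ is correct and is in fact how the paper proceeds, but ``block-wise local triviality of $I$'' cannot be combined with it to force $I=S$: localizing at a flat $X$ only gives $I\subseteq I_X$, which is vacuous when $I_X=S$. What actually finishes the argument is a contradiction by degree count. Suppose no $g_j$ is a nonzero constant, i.e.\ $d_j>m_0$ whenever $g_j\neq 0$. Since the right-hand side of your relation is nonzero, some term satisfies $\overline{g_j}\,\overline{M_{1j}}\neq\overline{0}$; then $g_j\neq 0$ gives $\deg M_{1j}=|m|-d_j<|m|-m_0$. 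On the other hand --- and this is the ingredient missing from your toolkit --- by \cite[Proposition 2.2]{ATW08} the projection modulo $\alpha_0$ of \emph{every} derivation in $D\Am$ lies in $D(\A^{H_0},m^*)$; Lemma~\ref{lem:d0}, which you invoke, only covers derivations annihilating $\alpha_0$, and the $\theta_i$ of the given basis need not annihilate $\alpha_0$. Hence the nonzero minor $\overline{M_{1j}}$, being the Saito determinant of $\ell-1$ elements of $D(\A^{H_0},m^*)$, is divisible by $\Q(\A^{H_0},m^*)$ by Theorem~\ref{thm:saito}, so $\deg M_{1j}\ge|m^*|=|m|-m_0$, a contradiction. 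Note that local heaviness enters exactly once, through the equality $|m^*|=|m|-m_0$, and not through any patching of local solutions.
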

\begin{proof}
A change of coordinates allows us to assume $\alpha_0=x_1$.
By definition of $D\Am$ we have $\theta_i(x_1)=0$ for all $i$ with $d_i<m_0$.
Theorem~\ref{thm:saito} (Saito's criterion) ensures that there exists $1\le j\le \ell$ such that $\theta_j(x_1)\neq 0$.

Assume for a contradiction $d_i>m_0$ for all indices $i$ with $\theta_i(x_1)\neq 0$ and hence $\theta_i(x_1) = 0$ for all indices $i$ with $d_i\le m_0$.

Again by definition of $D\Am$ we can choose polynomials $f_i\in S$ such that $\theta_i(x_1)= x_1^{m_0} f_i$ for $1\le i \le \ell$.
Hence, the Laplace expansion of $M(\theta_1,\dots,\theta_\ell)$ along its first row yields
\[
\det M(\theta_1,\dots,\theta_\ell) = x_1^{m_0}\left( \sum_{i=1}^{\ell} (-1)^{i+1} f_ig_i\right),
\]
where $g_i\in S$ is the minor of $M(\theta_1,\dots,\theta_\ell)$ with the first row and $i$-th column removed. 
Therefore, Theorem~\ref{thm:saito} shows that $Q\Am= x_1^{m_0}\left( \sum_{i=1}^{\ell} (-1)^{i+1} f_ig_i\right)$.

Since $x_1^{m_0+1}\nmid Q\Am$ there exists an index, say $\ell$, such that 
\begin{equation}\label{eq:fg}
\overline{f_\ell g_\ell}\neq \overline{0}
\end{equation}
in $S/x_1S$.
In particular, we have
\[
\overline{0}\neq \overline{g_\ell}=\det(\overline{\theta_i(x_k)})_{i=1,\dots,\ell-1, k=2,\dots,\ell}.
\]
Hence, the derivations $\overline{\theta_1},\dots,\overline{\theta_{\ell-1}}$ are $S/x_1S$-independent.

Furthermore, \eqref{eq:fg} implies $f_\ell\neq 0$ and hence $d_\ell > m_0$ by assumption.
Therefore,
\begin{equation}\label{eq:deg_gl}
\deg g_\ell = |m|- d_\ell < |m|-m_0 = |m^*|,
\end{equation}
where the last equality holds since $H_0$ is locally heavy in $\Am$. 

By Proposition 2.2 in \cite{ATW08} the projected derivations $\overline{\theta_1},\dots,\overline{\theta_{\ell-1}}$ are elements of $D(\A^{H_0},m^*)$.
Hence, Theorem~\ref{thm:saito} implies that there exists some $f\in S$ such that
\[
\overline{g_\ell} = \overline{f}Q(\A^{H_0},m^*).
\]
Thus, $\deg g_\ell \ge |m^*|$ which contradicts~\eqref{eq:deg_gl}.
Therefore our assumption is false and we may without loss of generality assume $\theta_\ell(x_1)=x_1^{m_0}$ which proves the claim (1) of Proposition~\ref{prop:good_summand}.

For (2) we may assume $\theta_\ell(x_1)=x_1^{m_0}$ by the first part.
We perform a change of basis of $D\Am$ by setting $\theta'_i \coloneqq \theta_i - \frac{\theta_i(x_1)}{\theta_{\ell}(x_1)}\theta_{\ell}$ for $1\le i \le \ell -1$ and $\theta'_{\ell}\coloneqq \theta_{\ell}$.
Hence, the family $\theta'_1,\dots,\theta'_{\ell}$ also forms a basis of $D\Am$ and it holds that $\theta'_i(x_1)=0$ for $1\le i \le \ell -1$.
Thus, the Laplace expansion of the first row of the Saito matrix $M(\theta'_1,\dots,\theta'_{\ell})$ yields that the projected derivations $\overline{\theta'_1},\dots,\overline{\theta'_{\ell-1}}$ are $S/x_1S$-independent in $D(\A^{H_0},m^*)$.

Lastly, for (3) note that
\[
d_1+\dots + d_{\ell-1}=|m|-d_\ell=|m|-m_0=|m^*|.
\]
Thus, Saito's criterion (Theorem~\ref{thm:saito}) shows that the family $\overline{\theta'_1},\dots,\overline{\theta'_{\ell-1}}$ forms a basis of $D(\A^{H_0},m^*)$ which completes the proof.
\end{proof}

As a corollary to this proposition we obtain a strengthening of Proposition 3.6 in~\cite{AK18}:

\begin{cor}\label{cor:free}
	Let $\Am$ be a multiarrangement with $H_0\in \A$ such that $H_0$ is locally heavy. Then the following are equivalent:
	\begin{enumerate}
	\item[(1)] $\Am$ is free.
	\item[(2)] $(\A,m+k\delta_{H_0})$ is free for some $k\in\Z$ such that $H_0$ is also locally heavy in $(\A,m+k\delta_{H_0})$.
	\item[(3)] $(\A,m+k\delta_{H_0})$ is free for all $k\in\Z$ such that $H_0$ is locally heavy in $(\A,m+k\delta_{H_0})$.
	\end{enumerate}
	Note that $k$ is allowed to be negative in (2) and (3).
\end{cor}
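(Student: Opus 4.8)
The plan is to reduce the whole equivalence to a single one-step comparison between multiplicities differing by $\delta_{H_0}$, and then to propagate freeness across the entire admissible range of $k$ by iterating that step. The conceptual engine is Proposition~\ref{prop:good_summand} combined with the Addition-Deletion Theorem (Theorem~\ref{thm:add_del}). The first point I would record is the crucial invariance of the restriction: if $H_0$ is locally heavy in a multiarrangement $(\A,n)$, then its Euler restriction is $(\A^{H_0},m^*)$ with $n^*(X)=|n_X|-n(H_0)$, and since $H_0\in\A_X$ for every $X\in\A^{H_0}$, replacing $n$ by $n+k\delta_{H_0}$ increases both $|n_X|$ and $n(H_0)$ by the same amount $k$. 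Hence the Euler--Ziegler restriction $(\A^{H_0},m^*)$ is literally unchanged under $n\mapsto n+k\delta_{H_0}$, as long as $H_0$ remains locally heavy.

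Next I would prove the one-step claim: if $H_0$ is locally heavy in both $(\A,n)$ and $(\A,n+\delta_{H_0})$, with $n_0\coloneqq n(H_0)$, then $(\A,n)$ is free if and only if $(\A,n+\delta_{H_0})$ is free. Here I would apply the addition-deletion triple to the larger arrangement $(\A,n+\delta_{H_0})$, whose deletion along $H_0$ is exactly $(\A,n)$ because $n_0+1\ge 2$, and whose Euler restriction is $(\A^{H_0},m^*)$ by the invariance above. If $(\A,n)$ is free, Proposition~\ref{prop:good_summand} supplies a good summand, so one exponent equals $n_0$ and $(\A^{H_0},m^*)$ is free with the remaining exponents; feeding statements~\eqref{6.5b} and~\eqref{6.5c} of Theorem~\ref{thm:add_del} then yields that $(\A,n+\delta_{H_0})$ is free, with its top exponent raised to $n_0+1$. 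Conversely, if $(\A,n+\delta_{H_0})$ is free, Proposition~\ref{prop:good_summand} gives a good summand of degree $n_0+1$ together with freeness of $(\A^{H_0},m^*)$, and statements~\eqref{6.5a} and~\eqref{6.5c} of Theorem~\ref{thm:add_del} force the deletion $(\A,n)$ to be free.

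Finally I would observe that the set $K\coloneqq\{k\in\Z : H_0 \text{ is locally heavy in } (\A,m+k\delta_{H_0})\}$ is an interval of integers. Indeed, local heaviness reads $m(H_0)+k\ge\sum_{L\in\A_X,\,L\neq H_0}m(L)$ for the relevant $X$, a bound independent of $k$, together with positivity $m(H_0)+k\ge 1$; hence $K=\{k : k\ge k_{\min}\}$ for some $k_{\min}\le 0$, and in particular $0\in K$. Since consecutive elements of $K$ are linked by the one-step claim, an induction upward and downward from $k=0$ shows that freeness of $(\A,m+k\delta_{H_0})$ for one $k\in K$ is equivalent to freeness for $k=0$ and equivalent to freeness for all $k\in K$. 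This delivers $(1)\Leftrightarrow(2)\Leftrightarrow(3)$ simultaneously, and the admissibility of negative $k$ is just the part of $K$ below $0$, handled by the same downward induction.

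The main obstacle I anticipate is purely bookkeeping: verifying that the exponents produced by Proposition~\ref{prop:good_summand} match the pattern $(e_1,\ldots,e_{\ell-1},e_\ell)$ / $(e_1,\ldots,e_{\ell-1},e_\ell-1)$ / $(e_1,\ldots,e_{\ell-1})$ required across the three statements of the addition-deletion theorem in \emph{both} directions, and confirming that the deletion of $(\A,n+\delta_{H_0})$ is genuinely $(\A,n)$ rather than a hyperplane removal, which is exactly why one should delete from the heavier $(+\delta_{H_0})$ side whose multiplicity is at least $2$. Everything else is the invariance computation and the interval argument, both routine.
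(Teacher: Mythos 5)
Your proof is correct and takes essentially the same route as the paper's: Proposition~\ref{prop:good_summand} combined with the addition-deletion theorem (Theorem~\ref{thm:add_del}), iterated one multiplicity step at a time, using that the Euler--Ziegler restriction is invariant under $m \mapsto m + k\delta_{H_0}$ and that all intermediate multiplicities stay locally heavy. Your explicit one-step claim and the interval argument for the admissible set of $k$ merely spell out rigorously what the paper compresses into ``repeated applications of the addition'' (resp.\ deletion), so this is a faithful, slightly more detailed version of the paper's own argument.
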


\begin{proof}
Assume that $\Am$ is free. 
Proposition~\ref{prop:good_summand} then implies that $\exp \Am = (m_0,d_2,\ldots d_\ell)$ holds and $(\A^{H_0},m^{H_0})$ is also free with $\exp (\A^{H_0},m^{H_0}) = (d_2,\ldots d_\ell)$. 
This Euler--Ziegler restriction 
$(\A^{H_0},m^{H_0})$ coincides with the Euler
restriction $(\A^{H_0},m^*)$ on $\A^{H_0}$ since $H_0$ is a locally heavy hyperplane. 
So repeated applications of the addition in Theorem~\ref{thm:add_del} yields the freeness of $(\A,m+k\delta_H)$ since this arrangement and all intermediate arrangements are locally heavy by assumption with the identical Euler--Ziegler restriction. Conversely, if one assumes that $(\A,m+k\delta_H)$ is free, the same argument with the deletion part of Theorem~\ref{thm:add_del} shows the freeness of $\Am$.
\end{proof}

Before proving Theorem~\ref{th:loc_heavy} we give another Lemma:

\begin{lemma}\label{lem:b2}
Let $\Am$ be a multiarrangement and $H_0\in\A$ a locally heavy hyperplane with $m_0\coloneqq m(H_0)$.
Then
\[
b_2\Am -m_0(|m|-m_0) = \sum_{\substack{X\in L_2(\A), \\ X \not \subset H_0}} b_2(\A_X,m_X).
\]
\end{lemma}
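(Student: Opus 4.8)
The claim relates the second Betti number $b_2(\A,m)$ minus the "product" term $m_0(|m|-m_0)$ to a sum over rank-2 flats not contained in $H_0$. Let me think about this carefully.The plan is to split the defining sum for $b_2\Am$ from Definition~\ref{def:b2} according to whether a rank two flat $X\in L_2(\A)$ is contained in $H_0$ or not, and then to identify the contribution of the flats contained in $H_0$ with the correction term $m_0(|m|-m_0)$. Writing
\[
b_2\Am=\sum_{\substack{X\in L_2(\A),\\ X\subset H_0}}b_2(\A_X,m_X)+\sum_{\substack{X\in L_2(\A),\\ X\not\subset H_0}}b_2(\A_X,m_X),
\]
the lemma is equivalent to the identity $\sum_{X\subset H_0}b_2(\A_X,m_X)=m_0(|m|-m_0)$. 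So the whole content is a local computation of $b_2$ at each rank two flat lying on the heavy hyperplane.

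First I would fix such an $X\in L_2(\A)$ with $X\subset H_0$, so that $H_0\in\A_X$, and consider the rank two localization $(\A_X,m_X)$. Since $H_0$ is locally heavy in $\Am$, the defining inequality~\eqref{eq:local_heavy} guarantees $m(H_0)\ge\sum_{L\in\A_X,\,L\neq H_0}m(L)$ whenever $|\A_X|\ge 3$; and for $|\A_X|=2$ this same inequality holds automatically because a rank two flat on $H_0$ of size two consists of $H_0$ and exactly one other hyperplane $L$, and one has $m^{*}(X)=|m_X|-m_0$ in all cases as noted after the definition. The key point is therefore that $H_0$ is a \emph{heavy} hyperplane of the rank two multiarrangement $(\A_X,m_X)$. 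For a free rank two multiarrangement with a heavy hyperplane the exponents are explicitly known: the two nonzero exponents are $d_X^{(1)}=m_0$ and $d_X^{(2)}=|m_X|-m_0$, since $m_0\ge|m_X|-m_0$ forces one exponent to equal $m_0$ (this is the rank two instance of the good summand / Euler restriction statement, and also follows directly from balancing $d_X^{(1)}+d_X^{(2)}=|m_X|$ with the heaviness bound). Hence by Definition~\ref{def:b2}(2),
\[
b_2(\A_X,m_X)=d_X^{(1)}d_X^{(2)}=m_0\bigl(|m_X|-m_0\bigr).
\]

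Summing this over all $X\in L_2(\A)$ with $X\subset H_0$, the factor $m_0$ comes out and I am left with $m_0\sum_{X\subset H_0}(|m_X|-m_0)$. For each such $X$ the quantity $|m_X|-m_0=\sum_{L\in\A_X,\,L\neq H_0}m(L)$ counts the total multiplicity, weighted by $m$, of the hyperplanes $L\neq H_0$ meeting $H_0$ exactly along $X$. As $X$ ranges over $L_2(\A)$ contained in $H_0$, every hyperplane $L\in\A\setminus\{H_0\}$ is counted exactly once, since $L\cap H_0$ determines a unique such rank two flat $X$. Therefore $\sum_{X\subset H_0}(|m_X|-m_0)=\sum_{L\neq H_0}m(L)=|m|-m_0$, giving $\sum_{X\subset H_0}b_2(\A_X,m_X)=m_0(|m|-m_0)$, as required. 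The only mild obstacle is the bookkeeping at rank two flats of size two versus size at least three; I would handle both uniformly by observing that the exponent formula $(m_0,|m_X|-m_0)$ holds in either case, so no separate argument is needed, and the partition of $\A\setminus\{H_0\}$ by the flats $L\cap H_0$ is the clean combinatorial step that closes the computation.
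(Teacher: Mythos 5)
Your proposal is correct and follows essentially the same route as the paper's proof: split the sum $b_2\Am=\sum_{X\in L_2(\A)}b_2(\A_X,m_X)$ by whether $X\subset H_0$, use that $H_0$ is heavy in each localization along $H_0$ to get $b_2(\A_X,m_X)=m_0(|m_X|-m_0)$, and then count each $L\neq H_0$ once via its intersection flat $L\cap H_0$. The only quibble is your remark that the heaviness inequality ``holds automatically'' when $|\A_X|=2$ (it need not, e.g.\ if $m(L)>m_0$), but this is harmless since, as you note, the exponents of a two-hyperplane rank-two multiarrangement are just the two multiplicities, so the formula $b_2(\A_X,m_X)=m_0(|m_X|-m_0)$ holds in that case regardless.
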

\begin{proof}
The local-global formula (Theorem 3.3 in~\cite{ATW07}) enables us to express the second Betti number of $\Am$ as
\begin{align}
b_2\Am &= \sum_{X\in L_2(\A)} b_2(\A_X,m_X) \nonumber\\ 
&= \sum_{\substack{X\in L_2(\A), \\ X  \subset H_0}} b_2(\A_X,m_X)+ \sum_{\substack{X\in L_2(\A), \\ X \not \subset H_0}} b_2(\A_X,m_X).\label{eq:local_global}
\end{align}
Hence, it suffices to prove that the first summand in~\eqref{eq:local_global} equals $m_0(|m|-m_0)$.

In each localization $(\A_X,m_X)$ with $X\in L_2(\A)$ and $X \subset H_0$ the hyperplane $H_0$ is heavy by the definition of locally heavy.
Thus, these localizations are free with $\exp (\A_X,m_X) = (m_0,|m_X|-m_0)$, cf.\ e.g.\ \cite[Proposition 1.23 (i)]{Yos14}, and therefore we obtain $b_2(\A_X,m_X) = m_0(|m_X|-m_0)$.
Hence, we can compute
\begin{align}
 \sum_{\substack{X\in L_2(\A), \\ X  \subset H_0}} b_2(\A_X,m_X) &= \sum_{\substack{X\in L_2(\A), \\ X  \subset H_0}} m_0(|m_X|-m_0)\nonumber \\
 &= m_0\left( \sum_{\substack{X\in L_2(\A), \\ X  \subset H_0}} \sum_{\substack{K\in \A_X, \\ K\neq H_0}} m(K) \right)\label{eq:m_0}\\
 &=m_0(|m|-m_0).\nonumber
\end{align}
The double sum in Equation~\eqref{eq:m_0} counts the multiplicity of each hyperplane in $\A$ different from $H_0$ exactly once since each such hyperplane intersects $H_0$ exactly once.
Thus, this double sum equals $|m|-m_0$ which completes the proof.
\end{proof}

Combining the results derived so far, we are now able to prove Theorem~\ref{th:loc_heavy}:

\begin{proof}[Proof of Theorem~\ref{th:loc_heavy}]
By the assumption that $H_0$ is locally heavy in $\Am$, the Euler restriction $(\A^{H_0},m^*)$ coincides with the Euler--Ziegler restriction $(\A^{H_0},m^{H_0})$.
Therefore, the inequality~\eqref{eq:b2} follows immediately from Theorem~\ref{th:b2_ineq}.

For the characterization of the freeness of $\Am$ in the second part of Theorem~\ref{th:loc_heavy}, we fix some $k \in\N$ such that $H_0$ is a heavy hyperplane in $(\A,m+k\delta_{H_0})$.
By Corollary~\ref{cor:free} $\Am$ is free if and only if $(\A,m+k\delta_{H_0})$ is free.
Lemma~\ref{lem:b2} implies that the left hand side of Inequality~\eqref{eq:b2}, namely $b_2\Am -m_0(|m|-m_0)$, agrees for $\Am$ and $(\A,m+k\delta_{H_0})$.
Lastly by its definition, the Euler--Ziegler restriction $(\A^{H_0},m^{H_0})$ is independent of the multiplicity of $H_0$ and hence it agrees for $\Am$ and $(\A,m+k\delta_{H_0})$ too.
So in total, the characterization of freeness for $\Am$ follows immediately from the analogous characterization for $(\A,m+k\delta_{H_0})$ in Theorem~\ref{th:heavy_thm}.
\end{proof}

The proof of Theorem~\ref{th:flag} is very similar to the corresponding Theorem 1.7 in~\cite{AK18}.
We give it in its modified form for the sake of completeness.
\begin{proof}[Proof of Theorem~\ref{th:flag}]
For convenience in our notations, we set $\A=\left(\A^{X_0},m^{X_0}\right)$, where $m^{X_0} (H) =1$ for all $H\in\A$. Theorem~\ref{thm:AY} applied to $\A$ yields
\begin{equation}\label{Eq18}
\A \mbox{ is free} \Leftrightarrow \left( \A^{X_1},m^{X_1} \right) \mbox{ is free and } b_2(\A) -\left( \left| m^{X_0} \right|-1 \right) = b_2 \left(\A^{X_1},m^{X_1}\right).
\end{equation}
Note that $\left(\left(\A^{X_i}\right)^{X_{i+1}},\left(m^{X_i}\right)^{X_{i+1}}\right) = \left(\A^{X_{i+1}},m^{X_{i+1}}\right)$ and the hyperplane $X_{i+2}$ is locally heavy in the multiarrangement $\left(\A^{X_{i+1}},m^{X_{i+1}}\right)$ for $i=1,\ldots,\ell-3$.
In accordance with~\cite{AK18}, we will write $b_2^H\Am=b_2\Am-m(H)(|m|-m(H))$ to simplify our notation in the following.
Hence we can apply Theorem~\ref{th:loc_heavy} to obtain, 
for $i=1,\ldots,\ell-3$,
\begin{align}
\label{Eq19} &\left(\A^{X_{i}},m^{X_{i}}\right) \mbox{ is free} \Leftrightarrow \\
\nonumber &\left(\A^{X_{i+1}},m^{X_{i+1}} \right)\mbox{ is free and } b_2^{X_{i+1}} \left( \A^{X_{i}},m^{X_{i}} \right) = b_2 \left(\A^{X_{i+1}},m^{X_{i+1}}\right).
\end{align}
Since $\left(\A^{X_{\ell -2}},m^{X_{\ell -2}}\right)$ is a multiarrangement of rank 2, it is always free (cf.~\cite{Zie89}). So we can iteratively link the statements~\eqref{Eq18} and~\eqref{Eq19} to obtain
\begin{equation}\label{Eq20}
\A \mbox{ is free} \Leftrightarrow b_2^{X_{i+1}} \left( \A^{X_{i}},m^{X_{i}} \right) = b_2 \left(\A^{X_{i+1}},m^{X_{i+1}}\right) \mbox{for all } i=0,\ldots ,\ell -3.
\end{equation}
Since Theorems~\ref{thm:AY} and~\ref{th:loc_heavy} imply that $b_2^{X_{i+1}} \left( \A^{X_{i}},m^{X_{i}} \right) \ge b_2 \left(\A^{X_{i+1}},m^{X_{i+1}}\right)$ for all $i=0,\ldots ,\ell -3$, the right-hand side of~\eqref{Eq20} is equivalent to
\begin{equation}\label{eq100}
\sum_{i=0}^{\ell -3} b_2^{X_{i+1}} \left( \A^{X_{i}},m^{X_{i}} \right) = \sum_{i=0}^{\ell -3} b_2 \left(\A^{X_{i+1}},m^{X_{i+1}}\right).
\end{equation}
Noting that 
\begin{eqnarray*}
	b_2^{X_{i+1}}(\A^{X_{i}},m^{X_{i}})&=&
	b_2(\A^{X_i},m^{X_i})-m^{X_i}(X_{i+1})(|m^{X_i}|-m^{X_i}(X_{i+1}))
\end{eqnarray*}
we find that~\eqref{eq100} is in fact equivalent to
\begin{equation}\label{Eq21}
b_2 (\A) - \sum_{i=0}^{\ell -3} m^{X_{i}}(X_{i+1}) \left( \left|m^{X_{i}}\right|-m^{X_{i}}(X_{i+1} )\right) =b_2 \left(\A^{X_{\ell -2}},m^{X_{\ell-2}}\right) .
\end{equation}
Since $\exp \left(\A^{X_{\ell -2}},m^{X_{\ell -2}}\right)=\left( m^{X_{\ell -2}}(X_{\ell-1}), \left|m^{X_{\ell-2}}\right|-m^{X_{\ell-2}}(X_{\ell-1} ) \right)$, we have 
\[b_2 \left(\A^{X_{\ell -2}},m^{X_{\ell -2}}\right)= m^{X_{\ell -2}}(X_{\ell-1}) \left( \left|m^{X_{\ell-2}}\right|-m^{X_{\ell-2}}(X_{\ell-1} )\right).\]
This shows that~\eqref{Eq21} is equivalent to
\[
b_2 (\A) = \sum_{i=0}^{\ell -2} m^{X_{i}}(X_{i+1}) \left( \left|m^{X_{i}}\right|-m^{X_{i}}(X_{i+1} )\right)  .
\]
Since $\left|m^{X_{i}}\right|=\sum_{j=i}^{\ell-1} m^{X_{j}} (X_{j+1})$, the above 
equality is equivalent to
\begin{align*}
b_2 (\A) &= \sum_{i=0}^{\ell -2} m^{X_{i}}(X_{i+1}) \left( \sum_{j=i+1}^{\ell-1} m^{X_{j}} (X_{j+1})  )\right) \\
& = \sum_{0 \le i< j \le \ell-1}m^{X_i}(X_{i+1})m^{X_j}(X_{j+1})
\end{align*}
This completes the first part of the proof, since by~\eqref{Eq20} this is equivalent to $\A$ being free.

The characteristic polynomial $\chi (\A;t)$, and in particular also $b_2(\A)$, of a simple arrangement $\A$ is combinatorially determined.
The same holds true for the multiplicities of the Euler-Ziegler restrictions. 
Therefore the freeness of any arrangement in $LHF_\ell$ depends only on its combinatorics.
For the supersolvablity, use Proposition 4.2 in \cite{Abe17}.
\end{proof}

\section{Totally Non-Freeness and Generic Hyperplanes}\label{sec:generic}

Before stating the main result of this section we make the following definitions:
\begin{definition}
An arrangement $\A$ in a vector space $V$ is called \textbf{reducible} if after a change of coordinates $\A =\A_1\times \A_2$ where $\A_i$ is an arrangements of hyperplanes in the vector space $V_i$ for $i=1,2$ such that $V_1\oplus V_2=V$.
Otherwise, $\A$ is called \textbf{irreducible}.
A hyperplane $H$ in an arrangement $\A$ is defined to be a \textbf{generic hyperplane} if $|\A_X|=2$ for all $X\in L_2(\A)$ with $X\subset H$.
If all hyperplanes in $\A$ are generic the arrangement $\A$ is defined to be a \textbf{generic arrangement}.
\end{definition}

\begin{definition}\cite[Definition 5.4]{ATW08}
An arrangement $\A$ is called \textbf{totally nonfree} if for any multiplicity $m: \A \rightarrow \Z_{>0}$ the multiarrangement $\Am$ is not free.
\end{definition}

Yoshinaga showed that an irreducible generic arrangement of rank greater than $2$ is totally nonfree~\cite{Yos10}.
A simple irreducible arrangement of rank greater than $2$ which contains a generic hyperplane is known to be not free, cf.\ \cite{OT92}.
Recently, DiPasquale generalized this fact to multiarrangements by showing that such an arrangement is totally nonfree \cite[Corollary 4.13]{DiP18}.
Since a generic hyperplane is always locally heavy, we obtain another proof of this result as a corollary of our locally heaviness technique.

\begin{cor}\label{cor:generic}
Let $\A$ be an irreducible arrangement of rank greater than $2$ with a generic hyperplane $H_0\in \A$. Then $\A$ is totally nonfree.
\end{cor}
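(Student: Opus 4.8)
The plan is to exploit the locally-heaviness machinery developed above, reducing the question of total nonfreeness to the combinatorially determined Euler--Ziegler restriction. The key observation is that a generic hyperplane $H_0 \in \A$ is automatically locally heavy: for every $X \in L_2(\A)$ with $X \subset H_0$ we have $|\A_X| = 2$ by genericity, so there are no flats $X \in \A^{H_0}$ with $|\A_X| \ge 3$, and Equation~\eqref{eq:local_heavy} holds vacuously (for any multiplicity). Hence for \emph{any} multiplicity $m$ the hyperplane $H_0$ is locally heavy in $\Am$, and Theorem~\ref{th:loc_heavy} applies.

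The strategy is to argue by contradiction: suppose $\Am$ is free for some multiplicity $m$, and derive that $\A$ must be reducible. First I would apply Theorem~\ref{th:loc_heavy}: freeness of $\Am$ forces Inequality~\eqref{eq:b2} to hold with equality, i.e.\ $b_2\Am - m_0(|m|-m_0) = b_2(\A^{H_0},m^{H_0})$. By Lemma~\ref{lem:b2}, the left-hand side equals $\sum_{X \in L_2(\A),\, X \not\subset H_0} b_2(\A_X,m_X)$, collecting precisely the local second Betti numbers away from $H_0$. The next step is to analyze what genericity of $H_0$ forces about the flats $X \not\subset H_0$. Since $H_0$ is generic, every rank-two flat $X$ contained in $H_0$ has $|\A_X|=2$, so the hyperplanes of $\A$ other than $H_0$ are, roughly speaking, in ``general position'' relative to $H_0$; the contribution of the term $m_0(|m|-m_0)$ already accounts for all the interaction between $H_0$ and the rest. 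I would then compare $b_2(\A^{H_0},m^{H_0})$ with the sum over $X \not\subset H_0$ and show the equality case can only occur when the arrangement $\A \setminus \{H_0\}$ restricted to $H_0$ essentially reconstructs the full combinatorics, which genericity prevents unless $H_0$ splits off as a direct factor.

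The cleanest route, which I would pursue, is to interpret the equality case through the good-summand structure of Proposition~\ref{prop:good_summand}. Freeness of $\Am$ yields $\exp \Am = (m_0, d_2, \ldots, d_\ell)$ with a basis $\theta_1,\ldots,\theta_\ell$ where $\theta_\ell(\alpha_0) = \alpha_0^{m_0}$ and the projections $\overline{\theta_1},\ldots,\overline{\theta_{\ell-1}}$ form a basis of $D(\A^{H_0},m^{H_0})$. The generic hyperplane then imposes that the good summand $\theta_\ell$ is, up to the contribution along $\alpha_0$, forced to be a multiple of the Euler derivation in the $\alpha_0$-direction and to act trivially on the remaining coordinates; this degeneracy is exactly the obstruction that decouples the $H_0$-direction from the rest, yielding a product decomposition $\A = \A_1 \times \A_2$ with $\A_1 = \{H_0\}$ a rank-one factor, contradicting irreducibility.

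\textbf{The main obstacle} I anticipate is making the last step rigorous: translating the algebraic equality case (or the degeneracy of the good summand) into the geometric statement that $H_0$ splits off as a direct factor. The inequality-to-equality bookkeeping via Lemma~\ref{lem:b2} is routine, but showing that the \emph{only} way equality can hold for a generic $H_0$ in an \emph{irreducible} arrangement is impossible---equivalently, that equality forces reducibility---requires carefully relating the restricted Betti number $b_2(\A^{H_0},m^{H_0})$ to the unrestricted local contributions and ruling out coincidental numerical equalities. An alternative, possibly more robust approach would invoke Corollary~\ref{cor:free} to normalize $m_0$ (replacing $m$ by $m + k\delta_{H_0}$ while preserving freeness and local heaviness), reducing to a convenient multiplicity where the product structure becomes transparent, and then compare directly against Yoshinaga's theorem that irreducible generic arrangements of rank $\ge 3$ are totally nonfree. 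The crux in either case is the reducibility dichotomy, which is where the genericity hypothesis must do its real work.
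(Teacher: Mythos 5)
You correctly observe that a generic hyperplane is locally heavy for every multiplicity (vacuously, since every rank-two flat in $H_0$ satisfies $|\A_X|=2$), and your idea of normalizing the multiplicity of $H_0$ via Corollary~\ref{cor:free} is exactly the reduction the paper performs. However, there is a genuine gap at the heart of your argument: each route you sketch ends at the claim that ``equality forces reducibility,'' or that the good summand ``decouples the $\alpha_0$-direction, yielding a product decomposition,'' and you concede yourself that you do not know how to make this rigorous. That claim is the entire content of the corollary, and neither the $b_2$-bookkeeping of Theorem~\ref{th:loc_heavy} and Lemma~\ref{lem:b2} nor the mere existence of the good summand yields it. The missing ingredient is Lemma~\ref{lem:degree}: for an \emph{irreducible} multiarrangement with $m\not\equiv 1$, every nonzero $\theta\in D\Am$ has $\pdeg\theta\ge 2$. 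With $m(H_0)$ normalized to $1$ by Corollary~\ref{cor:free}, this collides numerically with Proposition~\ref{prop:good_summand}: freeness would produce a subfamily of $\ell-1$ basis elements whose projections form a basis of $D(\A^{H_0},m^*)$ with degree sum $|m|-d_j\le |m|-2$, while genericity of $H_0$ forces $|m^*|=|m|-1$, contradicting Saito's criterion (Theorem~\ref{thm:saito}). This purely degree-theoretic step---not a structural splitting argument---is how irreducibility enters, and your proposal never identifies it.

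Two further problems. First, your fallback of ``comparing directly against Yoshinaga's theorem'' is not available: that theorem concerns arrangements in which \emph{every} hyperplane is generic, whereas here only $H_0$ is assumed generic and the rest of $\A$ can be arbitrary. Second, after normalizing $m(H_0)=1$ you must still treat the simple case $m\equiv 1$ separately, since Lemma~\ref{lem:degree} fails there (the Euler derivation has degree $1$); the paper disposes of this case by quoting the classical fact that a simple irreducible arrangement of rank greater than $2$ containing a generic hyperplane is not free. Without these pieces your outline does not close.
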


Before proving this corollary, we recall a known auxiliary lemma:
\begin{lemma}\label{lem:degree}
Let $\Am$ be an irreducible multiarrangement with $m\not\equiv 1$. Then for any non-zero $\theta\in D\Am$ it holds that $\pdeg \theta \ge 2$.
\end{lemma}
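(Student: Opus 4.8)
The plan is to reduce the statement to an analysis of the degree-zero and degree-one homogeneous parts of $D\Am$, which suffices because $D\Am$ is a graded module and $\pdeg$ is defined on homogeneous elements. First I would record two preliminary observations. Since the condition $\theta(\alpha_H)\in\alpha_H^{m(H)}S$ is stronger than $\theta(\alpha_H)\in\alpha_H S$, there is an inclusion $D\Am\subseteq D(\A)$ of graded modules, where $\A$ denotes the underlying simple arrangement. Moreover, an irreducible arrangement is automatically essential: if the common intersection $\bigcap_{H\in\A}H$ were a nonzero subspace $W$, then splitting off a complement of $W$ would exhibit $\A$ as a nontrivial product, contradicting irreducibility. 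Hence the linear forms $\{\alpha_H\}_{H\in\A}$ span $V^*$.

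Next I would dispose of the two low degrees directly. In polynomial degree $0$ a derivation $\theta=\sum_i c_i\partial_{x_i}$ has $\theta(\alpha_H)\in\K$; since the minimal degree of a nonzero element of $\alpha_H^{m(H)}S$ is $m(H)\ge 1$, we must have $\theta(\alpha_H)=0$ for every $H$, and essentiality forces $\theta=0$. In polynomial degree $1$, $\theta$ induces a linear endomorphism $\phi$ of $V^*$ via $\phi(\alpha)=\theta(\alpha)$. For each $H$ the linear form $\theta(\alpha_H)$ lies in $\alpha_H^{m(H)}S$, so a degree count forces $\theta(\alpha_H)=c_H\alpha_H$ for a scalar $c_H$, with $c_H=0$ whenever $m(H)\ge 2$. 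Thus every $\alpha_H$ is an eigenvector of $\phi$, and since these span $V^*$, $\phi$ is diagonalizable.

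The heart of the argument is to show that $\phi$ must be a scalar. Grouping the hyperplanes by eigenvalue, let $\lambda_1,\dots,\lambda_s$ be the distinct values occurring among the $c_H$ and set $W_k\coloneqq\operatorname{span}\{\alpha_H : c_H=\lambda_k\}$. Because eigenspaces for distinct eigenvalues are independent and the $\alpha_H$ span $V^*$, this yields a direct sum decomposition $V^*=W_1\oplus\cdots\oplus W_s$ in which the defining form of every hyperplane lies in exactly one summand. Dualizing exhibits $\A$ as a product $\A_1\times\cdots\times\A_s$ of the nonempty subarrangements $\A_k$ collecting the hyperplanes with $c_H=\lambda_k$, so irreducibility forces $s=1$. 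Then $c_H=\lambda$ for all $H$, and since the $\alpha_H$ span we get $\phi=\lambda\cdot\mathrm{id}$, i.e. $\theta=\lambda\theta_E$ is a multiple of the Euler derivation $\theta_E=\sum_i x_i\partial_{x_i}$. Finally, as $m\not\equiv 1$ we may fix $H_1$ with $m(H_1)\ge 2$; then $\theta(\alpha_{H_1})=\lambda\alpha_{H_1}$ lies in $\alpha_{H_1}^{m(H_1)}S$ only if $\lambda=0$, whence $\theta=0$, contradicting $\theta\neq 0$. This proves $\pdeg\theta\ge 2$. I expect the product-decomposition step to be the main obstacle, since it is the only place where irreducibility is genuinely used and where one must verify that the eigenspace splitting of $V^*$ is compatible with the combinatorial structure of $\A$; the remaining steps are routine degree bookkeeping.
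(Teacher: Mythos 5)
Your proof is correct, and its skeleton matches the paper's: kill degree $0$ using essentiality (a consequence of irreducibility), show that a degree-$1$ logarithmic derivation must be a scalar multiple of the Euler derivation $\theta_E$ because otherwise $\A$ splits as a product, and finally use $m\not\equiv 1$ to exclude $\theta_E$ itself. The execution of the degree-$1$ step differs, however, and yours is the tighter one. The paper argues inside $D(\A)$ of the underlying simple arrangement: it chooses coordinates with $\{x_i=0\}\in\A$, writes $\theta(x_i)=c_ix_i$, subtracts $c_1\theta_E$, and then asserts that any hyperplane whose equation involves $x_1$ together with another variable would contradict $\theta\in D(\A)$, so that $\A$ splits off $\{x_1=0\}$. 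That assertion is not literally correct: a hyperplane may involve $x_1$ and any variable $x_j$ whose eigenvalue $c_j$ vanishes without violating the derivation condition (take $\theta=x_2\p_{x_2}$ and the form $x_1+x_3$). What is true --- and what your grouping by eigenvalues $\lambda_1,\ldots,\lambda_s$ establishes --- is that each defining form lies in a single eigenvalue class of the induced endomorphism $\phi$ of $V^*$, and the product decomposition contradicting irreducibility runs along these eigenvalue classes rather than along $\langle x_1\rangle\oplus\langle x_2,\ldots,x_\ell\rangle$. So your basis-free eigenspace argument proves the same statement while repairing this imprecision in the paper's write-up. A further minor difference: you work directly in $D(\A,m)$, where $m(H)\ge 2$ forces $c_H=0$ immediately, whereas the paper first determines the degree-$0$ and degree-$1$ parts of $D(\A)$ and only then intersects with $D(\A,m)$; both routes close the argument in the same way.
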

\begin{proof}
For a graded $S$-module $M$ we denote by $M_k$ the $k$-th graded piece of $M$.
We will firstly show that $D(\A)_0=0$ and $D(\A)=\langle \theta_E\rangle_S$ where $\theta_E=\sum_{i=1}^{\ell}x_i\p_{x_i}$ is the Euler derivation.

Assume there is a $\theta\in D(\A)_0$.
Hence, $\theta=\sum_{i=0}^{\ell}c_i\p_{x_i}$ for some $c_i\in \K$ for $i=1,\dots,\ell$ and we set $c\coloneqq (c_1,\dots,c_{\ell})$.
Therefore, for any $H\in\A$ we have $0=\theta(\alpha_H)=c\cdot \alpha_H$, which contradicts $\A$ being irreducible.

Secondly, consider a $\theta\in D(\A)_1$ with $\theta\notin \langle \theta_E\rangle_S$ and write again $\theta=\sum_{i=0}^{\ell}f_i\p_{x_i}$ for some $f_i\in S_1$ for $i=1,\dots,\ell$.
Since $\A$ is irreducible by assumption we may assume that $\A$ contains the hyperplanes $H_i\coloneqq \{x_i=0\}$ for all $1\le i \le \ell$.
Since $\theta(x_i)=f_i$ we find that $f_i=c_ix_i$ for suitable $c_i\in \K$ for all $1\le i \le \ell$.
By subtracting the scaled Euler derivation $c_1\theta_E$ from $\theta$ we can assume $c_1=0$, i.e.\ $\theta(x_1)=0$ and $\theta\neq 0$ since $\theta$ is not a multiple of $\theta_E$. 
This however implies that the defining linear equations of $\A$ except for $H_1$ only involve the variables $x_2,\dots,x_\ell$ since any hyperplane in both $x_1$ and any other variable would contradict $\theta\in D(\A)$.
This is a contradiction to the fact that $\A$ is irreducible which finishes the proof of our claim.

Now finally, we have $D(\A)_k\supseteq D\Am_k$ by definition of the derivation module.
By assumption, there is $H\in\A$ with $m(H)>1$.
Hence, $\theta_E(\alpha_H)=\alpha_H\notin \alpha_H^{m(H)}S$ which implies $\theta_E\notin D\Am$.
In light of the first part of the proof, we obtain $D\Am_0=0$ and $D\Am_1=0$.
\end{proof}

\begin{proof}[Proof of Corollary~\ref{cor:generic}]
Assume $\Am$ is free for some multiplicity $m$ with $\exp\Am=(d_1,\dots,d_\ell)$.
Firstly, we consider the case $m_0\coloneqq m(H_0)=1$.
In the case of a simple arrangement, i.e. $m(H)=1$ for all $H\in\A$, $\Am$ is known to be not free.
So we may assume $m\not\equiv 1$.
By assumption, the hyperplane $H_0$ is generic in $\A$ which implies for the Euler restriction $(\A^{H_0},m^*)$ of $\Am$ onto $H_0$
\[
	|m^*|=|m|-m_0=|m|-1.
\]
The fact that the arrangement $\A$ is irreducible and $m\not\equiv 1$ yields $d_i\ge 2$ for all $1\le i \le \ell$ by Lemma~\ref{lem:degree}.
Furthermore, by Proposition~\ref{prop:good_summand} at least one of the $(\ell-1)$ subsets of a basis of $D\Am$ forms a basis of $D(\A^{H_0},m^*)$.
So by combining the above statements with Saito's criterion we obtain
\[
	|m|-1=|m^*| \le |m|-2,
\]
which is a contradiction and hence $\Am$ is not free.

As a second case, we assume $m_0 >1$.
Since $H_0$ is a generic hyperplane it is by definition also locally heavy in the multiarrangement $\Am$.
Hence, Corollary~\ref{cor:free} implies that $\Am$ is free if and only if $(\A,\tilde{m})$ is free where the multiplicity $\tilde{m}$ is defined as
\[\tilde{m}(H)\coloneqq 
\begin{cases}
1& \mbox{if }H=H_0,\\
m(H)&\mbox{otherwise.}
\end{cases}
\]
However, the multiarrangement $(\A,\tilde{m})$ is not free by the first case which completes the proof.
\end{proof}

\section{Multiple Locally Heavy Hyperplanes}\label{sec:multiple}

In contrast to heavy hyperplanes a multiarrangement can have multiple locally heavy hyperplanes.
The following results show that in this case the multiarrangement is not free unless its combinatorics is of a special nature.

\begin{theorem}\label{th:twolh}
Let $\Am$ be a multiarrangement with two locally heavy hyperplanes $H,L\in \A$.
Then we have:
\begin{enumerate}[(1)]
\item If $\Am$ has rank $3$, it is free if and only if it is reducible.
\item Assume $\Am$ has rank $\ell> 3$, and there exists an $X\in L_3(\A)$ with $X\subset H\cap L$ such that $\A_X$ is reducible as $\A_X = \emptyset_{\ell-3}\times \widetilde{\A_X}$ where $\emptyset_{d}$ is the empty arrangement in a $\K^d$ and $\widetilde{\A_X}$ is irreducible.
Then $\Am$ is not free.
\end{enumerate}
\end{theorem}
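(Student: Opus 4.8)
The plan is to prove the two parts in order, reducing the higher-rank statement~(2) to the rank-three statement~(1).

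For~(1), the direction \emph{reducible $\Rightarrow$ free} is immediate: a reducible rank-$3$ arrangement decomposes after a change of coordinates as a product $\A=\A_1\times\A_2$ with $\rank\A_1+\rank\A_2=3$, so both factors have rank at most $2$ and are free by~\cite{Zie89}; since a product of free multiarrangements is free, $\Am$ is free. The content of~(1) is therefore the converse. I would begin by recording the combinatorial constraint that two locally heavy hyperplanes impose, namely $|\A_{H\cap L}|=2$. Indeed, if $|\A_{H\cap L}|\ge 3$ then $X\coloneqq H\cap L\in L_2(\A)$ lies in both $H$ and $L$, so applying Inequality~\eqref{eq:local_heavy} to $H$ and to $L$ at $X$ yields $m(H)\ge m(L)+\sum_{K\in\A_X\setminus\{H,L\}}m(K)$ together with the symmetric inequality for $L$; adding these forces $\A_X=\{H,L\}$, a contradiction. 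Hence only $H$ and $L$ meet the line $H\cap L$, and consequently $m^{H}(H\cap L)=m(L)$ and $m^{L}(H\cap L)=m(H)$.

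To establish \emph{free $\Rightarrow$ reducible}, assume $\Am$ is free. Because $|\A_{H\cap L}|=2$, enlarging $m(L)$ leaves the local heaviness of $H$ untouched, so by Corollary~\ref{cor:free} I may increase $m(L)$ until $L$ is heavy in the global sense and $m(L)>m(H)$, all without losing freeness or the local heaviness of $H$ (and reducibility of the underlying $\A$ is unaffected by this renormalization). If $H$ or $L$ is a \emph{generic} hyperplane, then an irreducible $\A$ would be totally nonfree by Corollary~\ref{cor:generic}, contradicting freeness; so $\A$ is already reducible in that case, and I may assume henceforth that neither $H$ nor $L$ is generic. Now Proposition~\ref{prop:good_summand} at $H$ and Theorem~\ref{th:heavy_thm} at $L$ produce good summands of degrees $m(H)$ and $m(L)$ and, since $m(L)>m(H)$, identify the exponents of the rank-two Euler–Ziegler restrictions as $\exp(\A^{H},m^{H})=(m(L),s)$ and $\exp(\A^{L},m^{L})=(m(H),s)$, where $s=\sum_{K\in\A\setminus\{H,L\}}m(K)$. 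Working in coordinates with $\alpha_H=x$, $\alpha_L=y$ and $H\cap L$ the $z$-axis, so that every hyperplane other than $H,L$ has nonzero $z$-coefficient, the interplay of the two good summands with Saito's criterion (Theorem~\ref{thm:saito}) should force the defining forms to decouple, exhibiting a product decomposition of $\A$.

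\textbf{I expect this decoupling step to be the main obstacle.} The difficulty is that one must extract an honest coordinate splitting from the derivation data rather than a mere numerical identity: the $b_2$-equalities delivered by Theorem~\ref{th:loc_heavy} at $H$ and at $L$ are each, on their own, already equivalent to the freeness of $\Am$ (via Lemma~\ref{lem:b2} they only re-express the local contributions at the rank-two flats off $H$, respectively off $L$), so no purely combinatorial comparison of the two can yield reducibility; the argument has to use the good summands structurally, presumably combined with the degree bound of Lemma~\ref{lem:degree} to eliminate the irreducible configurations. For~(2) I would then localize at $X$ and invoke~(1): since $\Am$ is free and localizations of free multiarrangements are free, $(\A_X,m_X)$ is free, and writing $\A_X=\emptyset_{\ell-3}\times\widetilde{\A_X}$ the irreducible rank-$3$ factor $(\widetilde{\A_X},\widetilde m)$ is free as well. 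Because $X\subset H\cap L$ we have $H,L\in\A_X$, and as $\emptyset_{\ell-3}$ contributes no hyperplanes, both are hyperplanes of $\widetilde{\A_X}$. Local heaviness is inherited under localization, since for a rank-two flat $Y\supseteq X$ one has $(\A_X)_Y=\A_Y$, so Inequality~\eqref{eq:local_heavy} for $H$ (resp.\ $L$) in $\A$ restricts to the corresponding inequality in $\A_X$. Thus $(\widetilde{\A_X},\widetilde m)$ is an irreducible free rank-$3$ multiarrangement with two locally heavy hyperplanes, contradicting part~(1); hence $\Am$ is not free.
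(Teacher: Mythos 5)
Your part (2) is correct and is exactly the paper's argument (localize at $X$, note that freeness and local heaviness pass to the localization, and invoke part (1) on the irreducible rank-$3$ factor). In part (1), the direction \emph{reducible $\Rightarrow$ free} is also handled as in the paper, and your observation that two locally heavy hyperplanes force $|\A_{H\cap L}|=2$ is correct (it is in fact used implicitly in the paper's computation). The genuine gap is that the main content of the theorem, \emph{free $\Rightarrow$ reducible} in rank $3$, is precisely the step you do not prove: the proposed "decoupling" of the defining forms via the two good summands and Saito's criterion is only announced ("should force the defining forms to decouple"), with no mechanism for extracting a coordinate splitting, and you yourself flag it as the main obstacle. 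As it stands, the proposal proves only the easy direction plus the reduction of (2) to (1).

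Moreover, the reason you give for abandoning a combinatorial argument is mistaken, and the paper concludes by exactly such an argument; no product decomposition is ever exhibited. You argue that since the $b_2$-equality of Theorem~\ref{th:loc_heavy} at $H$ is (in rank $3$, where the restriction is automatically free) equivalent to freeness, it cannot carry extra information. But freeness gives strictly more than that equality: Proposition~\ref{prop:good_summand}, applied at both $H$ and $L$, pins down the exponents $\exp\Am=(m(H),m(L),d_3)$, hence $\exp(\A^{H},m^{H})=(m(L),d_3)$ and $b_2(\A^{H},m^{H})=m(L)d_3$, so the right-hand side of the equality is an explicit number. The paper then rewrites the left-hand side $b_2\Am-m(H)(|m|-m(H))$ via Lemma~\ref{lem:b2} as $\sum_{X\in L_2(\A),\ X\not\subset H}b_2(\A_X,m_X)$ and splits it into the flats contained in $L$ and the flats contained in neither $H$ nor $L$. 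Since $L$ is locally heavy, each flat of the first kind satisfies $b_2(\A_X,m_X)=m(L)(|m_X|-m(L))$, and because every $K\in\A\setminus\{H,L\}$ meets $L$ in a flat not contained in $H$ (this is where your own observation $|\A_{H\cap L}|=2$ enters), the first group sums to exactly $m(L)d_3$. Irreducibility guarantees at least one rank-two flat contained in neither $H$ nor $L$, which contributes $b_2(\A_X,m_X)>0$, so the left-hand side exceeds $m(L)d_3$, contradicting the equality. With this observation your preliminary reductions (renormalizing $m(L)$ via Corollary~\ref{cor:free}, the genericity case via Corollary~\ref{cor:generic}) become unnecessary, and the proof closes without any structural decoupling step.
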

\begin{proof}
For (1) note that a reducible rank $3$ multiarrangement is always free since any rank $2$ multiarrangement is free.
So let $\Am$ be irreducible of rank $3$ and assume it is free.
Then by Proposition~\ref{prop:good_summand} we have $\exp\Am=(m(H),m(L),d_3)$. 
Theorem~\ref{th:loc_heavy} now yields that $(\A^H,m^H)$ is free with $\exp(\A^H,m^H) =(m(L),d_3)$ and hence $b_2(\A^H,m^H) =m(L)d_3$.
On the other hand, Theorem~\ref{th:loc_heavy} also shows
\begin{equation}\label{eq:contradiction}
b_2\Am -m(H)(|m|-m(H)) = b_2(\A^H,m^H)=m(L)d_3.
\end{equation}
By Lemma~\ref{lem:b2}, the left hand side of this equation can computed as
\begin{align*}
b_2\Am -m(H)(|m|-m(H))&=\sum_{\substack{X\in L_2(\A), \\ X \not \subset H}} b_2(\A_X,m_X)\\
&= \sum_{\substack{X\in L_2(\A), \\ X \not \subset H\\X  \subset L}} b_2(\A_X,m_X) + \sum_{\substack{X\in L_2(\A), \\ X \not \subset H\\X  \not \subset L}} b_2(\A_X,m_X)\\
&=m(L)d_3+ \sum_{\substack{X\in L_2(\A), \\ X \not \subset H\\X  \not \subset L}} b_2(\A_X,m_X),
\end{align*}
where the last equation holds since $L$ is locally heavy and it intersects all remaining hyperplanes whose multiplicities sum to $d_3$.
Since $\Am$ is irreducible there is at least one $X\in L_2(\A)$ with $ X \not \subset H$ and $X  \not \subset L$.
Hence, the second summand in the above equation is strictly positive which is a contradiction to Equation~\eqref{eq:contradiction}.
Therefore, $\Am$ is not free in this case.

For (2) assume again that $\Am$ is free.
Now consider the localization $(\A_X,m_X)$ which is also free since any localization of a free is multiarrangement is free.
The rank $3$ multiarrangement $(\A_X,m_X)$ contains the hyperplanes $H,L$ which are clearly also locally heavy in $(\A_X,m_X)$.
Furthermore, $(\A_X,m_X)$ contains the irreducible component $(\widetilde{\A_X},m_X)$ by assumption.
Hence, by part (1) the multiarrangement $(\A_X,m_X)$ is not free which is a contradiction.
Thus, $\Am$ is not free.
\end{proof}

\begin{example}
	Consider again a multiarrangement $\Am$ whose underlying simple arrangement is the braid arrangement $A_3$ and which is given by the defining equation
	\[
	\Q\Am = x(x-y)^2(x-z)y(y-z)z^2.
	\]
	Both the hyperplanes $\{x-y=0\}$ and $\{z=0\}$ are locally heavy in $\Am$.
	Since the underlying braid arrangement is irreducible, Theorem~\ref{th:twolh} immediately shows that $\Am$ is nonfree.
\end{example}

\printbibliography

\end{document}